\newtheorem{theorem}{Theorem}[section]
\newtheorem{lemma}[theorem]{Lemma}
\newtheorem{definition}[theorem]{Definition}
\newtheorem{thm}{Theorem}
\newcommand \op {{\bf{O}}_{p}}
\newcommand \opd {{\bf{O}}_{p'}}
\newcommand \cent {{\bf{C}}}
\newcommand \bpig {{\textup B}_{\pi}(G)}
\newcommand \ipi {{\textup I}_{\pi}}
\newcommand \ipig {{\textup I}_{\pi}(G)}
\newcommand \ipid {{\textup I}_{\pi'}}
\newcommand \bpid {{\textup B}_{\pi'}}
\newcommand \irr {\textup{Irr}}
\newcommand \irrg {\textup{Irr}(G)}
\newcommand \ibr {{\textup{IBr}}_p}
\newcommand \ibrg {{\textup{IBr}}_p(G)}
\newcommand \ngp {{\bf{N}}_G(P)}
\newcommand \nmp {{\bf{N}}_M(P)}
\newcommand \ngq {{\bf{N}}_G(Q)}
\newcommand \rdz {{\textup{rdz}}}
\newcommand \norm {{\bf{N}}}
\newcommand \dvd {\hbox {\big|}}
\newcommand \ndvd {\hbox {/}\kern-5pt\dvd}
\newcommand \nrml {\lhd}
\def \< {\langle}
\def \> {\rangle}
\begin{document}

\title{Lifts and vertex pairs in solvable groups}


\author{James P. Cossey}
\author{Mark L. Lewis}

\address{Department of Theoretical and Applied Mathematics,
University of Akron,
Akron, OH 44325}

\email{cossey@uakron.edu}

\address{Department of Mathematical Sciences, Kent State
University, Kent, OH 44242}

\email{lewis@math.kent.edu}

\keywords{lifts, Brauer characters, Finite groups, Representations, Solvable groups} \subjclass[2000]{Primary 20C20, 20C15}

\begin{abstract}  Suppose $G$ is a $p$-solvable group, where $p$ is odd.  We explore the connection between lifts of Brauer characters of $G$ and certain local objects in $G$, called vertex pairs.  We show that if $\chi$ is a lift, then the vertex pairs of $\chi$ form a single conjugacy class.  We use this to prove a sufficient condition for a given pair to be a vertex pair of a lift and to study the behavior of lifts with respect to normal subgroups.

\end{abstract}

\maketitle

\section{Introduction}

Throughout this paper $G$ is a finite group and $p$ is a fixed prime.  If $G$ is $p$-solvable and $\varphi \in \ibrg$, the Fong-Swan theorem shows that there is an irreducible character $\chi \in \irrg$ such that $\chi^o = \varphi$, where $\chi^o$ denotes the restriction of $\chi$ to the $p$-regular elements of $G$.  In this case the character $\chi$ is called a {\it lift} of $\varphi$, and in general, if $\chi^o$ is irreducible, then $\chi$ is called {\it a lift}.  It is certainly the case that $\varphi$ could have many lifts, and one active area of research is to understand the set of lifts of $\varphi$ (see \cite{bounds}, \cite{abvertex}, \cite{newnav}).
  
Suppose that $M$ is a normal subgroup of $G$ and $\chi$ is a lift of $\varphi$.  One would like to know under what conditions are the constituents of $\chi_M$ lifts?  By Clifford's theorem, if one constituent of $\chi_M$ is a lift, then they all are.  Note that it is certainly not the case that the constituents of $\chi_M$ must be lifts (see the example at the end of \cite{rmjm}), though if $p$ is odd and $G/M$ is a $p$-group, Navarro has shown that the constituents of $\chi_M$ must be lifts \cite{newnav}.  In this note, we find another sufficient condition that will imply that the constituents of $\chi_M$ are lifts.

Our condition makes use of vertex pairs.  To each ordinary irreducible character $\chi$ of a $p$-solvable group, one can associate a vertex pair $(Q, \delta)$ to $\chi$, where $Q$ is a $p$-subgroup of $G$ and $\delta \in \irr(Q)$ (see Section \ref{pairs} for the precise definition of a vertex pair).  These vertex pairs generalize the vertex subgroups developed by Green \cite{green} and share many of their properties.  In fact, if $p$ is odd and $G$ is $p$-solvable, and $\chi \in \irrg$ has vertex pair $(Q, \delta)$ and is a lift of $\varphi \in \ibrg$, then it is known that $Q$ is a vertex for the irreducible module corresponding to $\varphi$, and $\delta$ is linear \cite{newnav}.  These vertex pairs are \lq \lq local\rq\rq \ objects that yield information on the lifts of the Brauer characters.

With this definition, we can state our condition:

\begin{thm}\label{first}
Let $G$ be $p$-solvable where $p$ is odd, and suppose $M \nrml G$.  Let $\chi \in \irr(G)$ be a lift of $\varphi \in \ibrg$ with vertex pair $(Q, \delta)$, and write $P = Q \cap M$ and $\lambda = \delta_{P}$.  If $P$ is abelian and $\lambda$ is invariant in $\nmp$, then the constituents of $\chi_M$ are lifts.
\end{thm}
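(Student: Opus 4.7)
The plan is to identify $(P,\lambda)$ as a vertex pair for some irreducible constituent $\theta$ of $\chi_M$, invoke a sufficient condition (of the type advertised in the abstract) for such a pair to be a vertex pair of a lift, and conclude that $\theta$, and hence by Clifford's theorem every constituent of $\chi_M$, is a lift. A preliminary observation is that since $\chi$ is a lift, the Navarro result recalled in the introduction forces $\delta$ to be linear, so $\lambda = \delta_P$ is automatically linear as well, which is the correct form of the $p$-side datum for the vertex pair of a lift.

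To realize $(P,\lambda)$ as a vertex pair for some $\theta$ lying under $\chi$, I would unpack the construction of the vertex pair $(Q,\delta)$ for $\chi$ from Section \ref{pairs}: it arises from a chain of Clifford-type reductions passing through stabilizers of irreducible characters of successive normal $p'$-sections of $G$, terminating in a subgroup $W$ whose Sylow $p$-subgroup is $Q$, together with a primitive lift of $W$ whose $Q$-restriction encodes $\delta$. I would then mirror this chain inside $M$: at each level, choose an $M$-constituent of the current character and pass to its stabilizer in the appropriate subgroup of $M$. The hypothesis that $\lambda$ is invariant in $\nmp$ is used to synchronize the $G$-chain and the $M$-chain level by level, and the abelianness of $P$ is used to keep the $p$-side of the chain linear throughout, so that the output of the $M$-chain is a primitive lift of $W \cap M$ whose $P$-restriction is exactly $\lambda$, providing the vertex pair $(P,\lambda)$ for a constituent $\theta$ of $\chi_M$.

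The main obstacle will be the inductive bookkeeping: verifying at each reduction step that the chosen constituent inside $M$ has a stabilizer whose intersection with $Q$ remains equal to $P$, and that the primitive-lift and linearity conditions transfer cleanly to the $M$-side. The $\nmp$-invariance of $\lambda$ is the key ingredient for the first of these, since it prevents ramification on the $p$-side when one imports Clifford correspondences from $G$ to $M$; the abelianness of $P$ is what prevents linearity from being lost as one restricts from $Q$ to $P$ across the chain. A cleaner alternative would be an induction on $|G/M|$ along a chief series of $G/M$, splitting into the $p'$-factor case, where the compatibility of the Fong-Swan correspondence with Clifford's theorem yields the result, and the $p$-factor case, which is exactly Navarro's theorem from \cite{newnav}; in this second approach, the work reduces to showing that the hypotheses on $(P,\lambda)$ survive the passage to the quotient at each step of the series.
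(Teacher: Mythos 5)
There is a genuine gap at the center of your main plan. Establishing that some constituent $\psi$ of $\chi_M$ has vertex pair $(P,\lambda)$ is indeed the right first step (it is part (b) of Lemma \ref{normlemma} in the paper), but from there you propose to invoke the sufficient condition of Theorem \ref{thmB} inside $M$ and ``conclude'' that $\psi$ is a lift. Theorem \ref{thmB} only produces the \emph{existence} of a (unique) lift of the relevant Brauer constituent $\theta\in\ibr(M)$ with vertex pair $(P,\lambda)$; it does not say that every irreducible character of $M$ with vertex pair $(P,\lambda)$ is a lift, and a vertex pair does not determine an irreducible character. So knowing that $\psi$ has vertex pair $(P,\lambda)$ and that \emph{some} lift has vertex pair $(P,\lambda)$ does not identify the two. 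Closing exactly this gap is the content of the paper's proof: after Clifford reductions one passes to a character triple isomorphism making $N=\opd(M)$ central, then uses the Hall--Higman lemma together with the abelianness of $P$ to force $P=\op(M)\nrml M$ with $\lambda$ invariant in $M$, and only then does Navarro's relative-defect-zero bijection (Theorem \ref{rdzlemma}) show that \emph{every} member of $\rdz(M\mid\lambda)$ --- in particular $\psi$, which lies in $\rdz(M\mid\lambda)$ because its vertex pair is $(P,\lambda)$ with $P$ now normal --- is a lift. None of these ingredients (character triples, Hall--Higman, the $\rdz$ bijection) appear in your outline, and without them the argument does not close.

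Your proposed ``cleaner alternative'' is also not viable as stated. Inducting along a chief series of $G/M$, the $p$-factor steps are indeed Navarro's theorem from \cite{newnav}, but the $p'$-factor steps are precisely where the conclusion can fail: the counterexample cited in the introduction (from \cite{rmjm}) shows that constituents of $\chi_M$ need not be lifts in general, and since the $p$-group case is always fine, the failure lives in the $p'$-direction. There is no general ``compatibility of the Fong--Swan correspondence with Clifford's theorem'' that handles that step; the hypotheses on $(P,\lambda)$ must enter there in an essential way, and your sketch gives no mechanism for how they do.
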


In order to prove this theorem, we need to understand the connection between lifts and their vertex pairs.  In particular, we need a uniqueness result regarding vertex pairs that generalizes the main result of \cite{vertodd}.  Given an irreducible character $\chi$ of a $p$-solvable group, there are many different ways to associate a vertex pair to $\chi$, and the resulting vertex pairs need not be conjugate \cite{counterexample}.  We show that if $p$ is odd and $\chi$ is a lift of a Brauer character $\varphi$, then in fact all of the vertex pairs for $\chi$ are conjugate, and thus it makes sense to speak of \lq \lq the\rq\rq \ vertex pair of the lift $\chi$.

\begin{thm}\label{thmA}  Let $p$ be an odd prime and $G$ a $p$-solvable group.  Suppose that $\chi \in \irrg$ is a lift of $\varphi \in \ibrg$.  Then all of the vertex pairs for $\chi$ are conjugate.
\end{thm}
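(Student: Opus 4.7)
The plan is to prove the theorem by induction on $|G|$. Let $(Q_1,\delta_1)$ and $(Q_2,\delta_2)$ be two vertex pairs for the lift $\chi$, arising from two (possibly different) chains of normal subgroups used in the inductive construction of a vertex pair. The goal is to show that they lie in a single $G$-orbit. Fix a minimal normal subgroup $N$ of $G$; since $G$ is $p$-solvable, $N$ is either a $p'$-group or an elementary abelian $p$-group, and these two cases will be handled separately.

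Suppose first that $N$ is a $p'$-group. Choose a constituent $\theta \in \irr(N)$ of $\chi_N$; by Clifford's theorem, every other constituent of $\chi_N$ is $G$-conjugate to $\theta$, so after applying a conjugation we may assume both vertex-pair constructions pass through the same $\theta$. Set $T = I_G(\theta)$ and let $\psi \in \irr(T|\theta)$ be the Clifford correspondent of $\chi$. Then $\psi$ is a lift of the Clifford correspondent of $\varphi$ in $T$, and the compatibility of the vertex-pair construction with Clifford correspondents at a $p'$-layer means that each $(Q_i,\delta_i)$ is $G$-conjugate to a vertex pair of $\psi$ inside $T$. If $T < G$, the inductive hypothesis applied to $\psi$ in $T$ gives conjugacy in $T$, which lifts to conjugacy in $G$. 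If $T = G$ then $\theta$ is $G$-invariant, and we use a character-triple isomorphism (valid because $|N|$ is coprime to $p$) to pass to a smaller group and invoke induction.

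Suppose now that $N$ is an elementary abelian $p$-group, and let $\lambda$ be a linear constituent of $\chi_N$. The key step is to show that both restrictions $(\delta_i)_N$ are $G$-conjugate to $\lambda$: this is exactly where the lift hypothesis on $\chi$ is used, since without it the two vertex pairs can disagree on $N$ (as the counterexample of \cite{counterexample} shows). After conjugating, we may assume $(\delta_1)_N = (\delta_2)_N = \lambda$. Set $T = I_G(\lambda)$. If $T < G$, the Clifford correspondent of $\chi$ above $\lambda$ is again a lift, and induction in $T$ finishes the case. If $T = G$, so $\lambda$ is $G$-invariant, we exploit the hypothesis that $p$ is odd: $\lambda$ admits a canonical extension $\hat\lambda \in \irr(G)$, and the character $\chi\overline{\hat\lambda}$ is an irreducible character of $G$ trivial on $N$, hence corresponds to a lift of an irreducible Brauer character of $G/N$. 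The vertex pairs of $\chi$ (which contain $N$ in this case) correspond bijectively to vertex pairs of this quotient character, and applying the inductive hypothesis in $G/N$ produces the required conjugacy.

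The main obstacle I anticipate is the $p$-layer case, since this is precisely where the lift hypothesis becomes essential and where general irreducible characters can have non-conjugate vertex pairs. Verifying that both $(\delta_i)_N$ are $G$-conjugate to the common $\lambda$, together with the bookkeeping needed to show that the vertex-pair construction commutes with the passage from $\chi$ to the $G/N$-character $\chi\overline{\hat\lambda}$, will carry most of the technical weight and will rely heavily on the oddness of $p$ and on refinements of the methods developed in \cite{vertodd}.
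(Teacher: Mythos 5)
Your proposal correctly locates where the difficulty lies but does not resolve it, and it starts from a mischaracterization of what must be proved. A vertex pair of $\chi$ is, by definition, obtained from an \emph{arbitrary} pair $(U,\psi)$ with $U \le G$, $\psi = \alpha\beta$ factored as a $p$-special times a $p'$-special character, and $\psi^G = \chi$; it is not attached to a chain of normal subgroups. The subgroup $U$ need not contain your chosen minimal normal subgroup $N$, and $\psi$ has no a priori relation to any particular constituent $\theta$ of $\chi_N$. So the step ``after applying a conjugation we may assume both vertex-pair constructions pass through the same $\theta$,'' and the subsequent appeal to ``compatibility of the vertex-pair construction with Clifford correspondents,'' is essentially the content of the theorem, not something that can be invoked. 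The paper supplies this via a chain of lemmas whose engine is Navarro's observation (Lemma \ref{navarro}) that when $2\in\pi$ a $\pi'$-special character whose restriction to the $\pi$-elements is an irreducible partial character must be linear; this forces the $p'$-special factor of any inducing $\psi$ to be linear (Lemma \ref{lemma 2.3}), which in turn yields that $|NU:U|$ is a $\pi$-number and that $\psi$ can be replaced by $\psi^{NU}$, still factored and with controlled $p'$-special factor (Lemmas \ref{lemma 3.2}, \ref{lemma 3.4}, \ref{lemma 3.5}). Only after pushing $U$ up to contain $N$ in this way can one pass to Clifford correspondents and induct. None of that machinery appears in your outline, and your closing paragraph concedes that it ``will carry most of the technical weight.''

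There is also a concrete error in your $p$-layer case. If $N$ is a normal elementary abelian $p$-subgroup and $\lambda\in\irr(N)$ is linear and $G$-invariant, $\lambda$ need not extend to a \emph{linear} character of $G$ (there is a cohomological obstruction, and even when an extension exists it is linear only when $\lambda$ is trivial on $N\cap G'$). Consequently $\chi\overline{\hat\lambda}$ is in general not an irreducible character of $G$ trivial on $N$, and the proposed bijection between vertex pairs of $\chi$ and of a quotient character collapses. The correct devices here are a character triple isomorphism, or the canonical $p$-special extension of $\lambda$ to subgroups of the form $QN$ with $Q$ a $p$-subgroup --- not multiplication inside $G$ itself. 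Note finally that the paper avoids your two-case analysis altogether: it works with the \emph{maximal} normal subgroup $N$ such that every constituent of $\chi_N$ is $\pi$-factored, which is exactly what makes the argument interface with Navarro's normal nucleus and terminate.
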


This theorem is known to be false if $p = 2$ (see \cite{counterexample}).  It is also known that the conclusion need not hold if $\chi$ is not a lift (see \cite{vertodd}).

As mentioned before, it is known that if $p$ is odd and $G$ is $p$-solvable, then the vertex subgroup $Q$ of any lift $\chi$ of $\varphi \in \ibrg$ is also the vertex subgroup of the irreducible module corresponding to $\varphi$.  It is not known, however, which characters of $Q$ can be vertex characters of lifts of $\varphi$.  Our last main result gives a sufficient condition for a character $\delta$ of $Q$ to be the vertex character of a lift of $\varphi$.

\begin{thm}\label{thmB} Let $p$ be an odd prime and $G$ a $p$-solvable group.  Suppose $\varphi \in \ibrg$ has vertex subgroup $Q$, and let $\delta \in \irr(Q)$.  If $Q$ is abelian and $\delta$ is invariant in $\ngq$, then there is a unique lift of $\varphi$ with vertex pair $(Q, \delta)$.
\end{thm}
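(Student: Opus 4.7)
My plan is to prove Theorem~\ref{thmB} by induction on $|G|$, handling existence and uniqueness in tandem. The base case $G = Q$ is immediate: $G$ is then a $p$-group, $\varphi = 1_G$, and since $Q$ is abelian the lifts of $1_G$ are exactly the elements of $\irr(Q)$; each $\delta \in \irr(Q)$ is thus the unique lift of $\varphi$ with vertex pair $(Q,\delta)$.

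For the inductive step I pick a proper normal subgroup $M \nrml G$ of prime index, so that $G/M$ is either a $p'$-group or a $p$-group. In the $p'$-index case, after replacing $Q$ by a suitable conjugate we have $Q \sbs M$. I apply Navarro's Brauer--Clifford correspondence to a constituent $\theta$ of $\varphi_M$: letting $T = I_G(\theta)$, the correspondent $\psi \in \ibr(T)$ inherits $Q$ as a vertex subgroup, so the inductive hypothesis applied inside $T$ (reducing via character triples if $T = G$) yields a unique lift $\chi_\psi$ of $\psi$ with vertex pair $(Q,\delta)$, and $\chi := \chi_\psi^G$ is the desired lift of $\varphi$; uniqueness is secured by Theorem~\ref{thmA}, which makes the vertex pair class of a lift a well-defined invariant. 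In the $p$-index case, set $P = Q \cap M$ and $\lambda = \delta_P$; then $P$ is abelian, and $\lambda$ is $\nmp$-invariant because $\delta$ is $\ngq$-invariant and $\nmp \sbs \ngp$. The inductive hypothesis applied inside $M$ to a constituent $\varphi_0$ of $\varphi_M$ (which has vertex $P$ by standard vertex theory for normal subgroups) yields a unique lift $\chi_0 \in \irr(M)$ of $\varphi_0$ with vertex pair $(P,\lambda)$, and Clifford theory for $\chi_0$ in $G$ combined with Theorem~\ref{first}---which forces the constituents of any candidate $\chi_M$ to be lifts with vertex pair $(P,\lambda)$---isolates the required $\chi$ covering $\chi_0$.

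The main obstacle is uniqueness in the $p$-index case: there may be several irreducible characters of $G$ over $\chi_0$ whose restriction to the $p$-regular elements equals $\varphi$, and isolating the unique one with vertex pair exactly $(Q,\delta)$ requires a delicate analysis of how vertex pairs are built up from normal subgroups via the nucleus construction. Theorem~\ref{thmA} is essential here, as the well-definedness of the vertex pair class of a lift provides the rigidity needed to pin down the correct candidate and to identify it with $\chi_\psi^G$ in the other case.
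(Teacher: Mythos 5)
Your proposal has genuine gaps, and the most serious one is exactly at the point you flag as the ``main obstacle.'' First, a structural issue: a $p$-solvable group need not have a normal subgroup of prime index (e.g.\ a nonabelian simple $p'$-group), so the decomposition into a ``$p'$-index case'' and a ``$p$-index case'' is not available in general; the paper instead descends along $N = \opd(G)$ via the Clifford correspondence for Brauer characters. Second, in your $p$-index case the claim that $\lambda = \delta_P$ is $\nmp$-invariant does not follow from $\delta$ being $\ngq$-invariant together with $\nmp \sbs \ngp$: elements of $\norm_M(P)$ need not normalize $Q$, so the hypothesis on $\delta$ gives you nothing about them. Third, invoking Theorem \ref{first} here is circular relative to the paper's logic, since the paper proves Theorem \ref{first} (via Theorem \ref{normal}) using Theorem \ref{thmB}; and in any case Theorem \ref{first} only tells you that constituents of $\chi_M$ are lifts, it does not isolate a unique $\chi$ over $\chi_0$ with vertex pair $(Q,\delta)$. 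Your closing paragraph concedes that this uniqueness step still ``requires a delicate analysis,'' which is an admission that the proof is incomplete precisely where the real work lies.

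The ingredient you are missing is the paper's endgame. After reducing via Clifford theory to the case where a constituent $\alpha$ of $\varphi_{\opd(G)}$ is $G$-invariant, the paper applies a character triple isomorphism (Lemmas \ref{char trip} and \ref{char trip pair}) to assume $N = \opd(G)$ is central, and then uses the Hall--Higman lemma together with the abelianness of $Q$ to force $Q = \op(G) \nrml G$, so that $\delta$ is $G$-invariant. At that point $\varphi$ is a defect zero Brauer character of $G/Q$, and Navarro's relative defect zero bijection (Theorem \ref{rdzlemma}) between the defect zero characters of $G/Q$ and $\rdz(G \mid \delta)$ delivers both existence and uniqueness of the lift with vertex pair $(Q,\delta)$ in a single step. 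Your $p'$-index branch (Clifford descent plus Theorem \ref{thmA} to control the vertex character of the correspondent) is essentially the paper's non-invariant case and is fine; it is the terminal, invariant case that your outline does not actually handle.
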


It is not yet known whether the hypotheses that $Q$ is abelian can be weakened.


\section{Restriction to normal subgroups}

In this section, we prove Theorem \ref{first} (assuming Theorems \ref{thmA} and \ref{thmB}).  Before we can prove Theorem \ref{first} we need an easy lemma.  We omit the proof of this lemma, as the first part is Theorem 3.2 of \cite{vertnormal} (and can also be found in \cite{laradjinormal}), and the proof of the second part consists of the exact same argument used to prove Theorem 1.1 of \cite{rmjm} (which used the version of Theorem \ref{thmA} when $|G|$ is assumed to be odd - note that we now know we need only assume that $G$ is $p$-solvable and $p$ is odd).

\begin{lemma}\label{normlemma}  Let $p$ be a prime and let $G$ be a $p$-solvable group, and let $\varphi \in \ibrg$ have vertex subgroup $Q$.  Suppose $M \nrml G$.  Then:

\begin{itemize}

\item[(a)]  There is some constituent $\theta$ of $\varphi_M$ that has vertex $Q \cap M$.

\item[(b)]  If $p$ is odd, and $\chi$ is a lift of $\varphi$ with vertex pair $(Q, \delta)$, then some constituent of $\chi_M$ has vertex pair $(Q \cap M, \delta_{Q \cap M})$.

\end{itemize}
\end{lemma}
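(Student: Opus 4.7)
My plan is to prove both parts by induction on $|G : M|$, leveraging Clifford theory and the construction of a vertex subgroup via normal nuclei. When $M = G$ the conclusion is trivial, so assume $M < G$ and fix a normal subgroup $N$ of $G$ with $M \leq N < G$. By Clifford's theorem, the constituents of $\varphi_N$ form a single $G$-orbit, so for part (a) it suffices to exhibit one constituent $\eta$ of $\varphi_N$ whose vertex (in $N$) is $Q \cap N$. Granting this, applying the inductive hypothesis to the triple $(N,\eta,M)$ yields a constituent $\theta$ of $\eta_M$, hence of $\varphi_M$, with vertex $(Q \cap N) \cap M = Q \cap M$.

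The one-step claim that some constituent of $\varphi_N$ has vertex $Q \cap N$ is the substantive part of (a) and is the content of Theorem 3.2 of \cite{vertnormal}. Its proof follows the Isaacs--Navarro construction of the vertex via a descending chain of normal nuclei in $B_{p'}$-theory; intersecting this chain with $N$ produces a parallel chain inside $N$ that computes a vertex for some constituent of $\varphi_N$, and tracking the $p$-parts of the subgroups in the two chains yields the stated intersection formula.

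For part (b), I would refine the same induction to carry the vertex character $\delta$. The key one-step refinement is: if $\chi$ is a lift of $\varphi$ with vertex pair $(Q, \delta)$, then some constituent of $\chi_N$ is a lift with vertex pair $(Q \cap N, \delta_{Q \cap N})$. This is exactly the argument used in the proof of Theorem 1.1 of \cite{rmjm}, where the hypothesis $|G|$ odd was imposed only in order to invoke a uniqueness-of-vertex-pairs result for lifts. Theorem \ref{thmA} supplies this uniqueness under the weaker hypothesis that $p$ is odd and $G$ is $p$-solvable, so the argument goes through verbatim.

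The main obstacle is the uniqueness of vertex pairs for lifts: without it, the restriction $\delta_{Q \cap N}$ would be an artifact of choices rather than a genuine invariant of the constituent, and the inductive bookkeeping that matches vertex characters at each step would collapse. Having Theorem \ref{thmA} at hand is precisely what converts the qualitative statement in (a) into the quantitative statement in (b), and this is exactly what is needed downstream in the proof of Theorem \ref{first}.
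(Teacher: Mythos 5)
Your proposal matches the paper's treatment: the authors omit the proof entirely, citing Theorem 3.2 of \cite{vertnormal} (or \cite{laradjinormal}) for part (a) and observing that part (b) is the argument of Theorem 1.1 of \cite{rmjm} with Theorem \ref{thmA} supplying the uniqueness of vertex pairs under the weakened hypothesis that $G$ is $p$-solvable and $p$ is odd, exactly as you identify. Your induction on $|G:M|$ is harmless but superfluous, since the ``one-step claim'' with $N=M$ is already the full statement being cited.
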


We will need to make use of a result of Navarro \cite{actions} regarding relative defect zero characters.

\begin{definition}  Let $p$ be a prime.  If $G$ is a group with $N \nrml G$, and $\mu \in \irr(N)$ and $\chi \in \irr(G \mid \mu)$, then we say that $\chi$ has relative defect zero (with respect to $p$) if $$(\chi(1)/\mu(1))_p = |G : N|_p.$$
\end{definition}

If the prime $p$ is clear from the context, we will simply refer to the relative defect zero characters over $\mu$.  We will denote the relative defect zero characters of $G$ lying over $\mu$ by ${\textup{rdz}}(G \mid \mu)$.  Note that ${\textup{rdz}}(G \mid 1_N)$ consists of precisely the defect zero characters of $G/N$.

The following, which is a restatement of Theorem 2.1 of \cite{actions}, will be key to our arguments.  Note there are no conditions on the group $G$ or the prime $p$ (other than of course $|G|$ is finite).

\begin{theorem}\label{rdzlemma}  Let $G$ be a finite group and $p$ a prime.  Let $D \nrml G$ be a $p$-subgroup and let $\mu \in \irr(N)$ be $G$-invariant.  Then there is a bijection $\chi \rightarrow \chi_{\mu}$ from the defect zero characters of $G/D$ to $\rdz(G \mid \mu)$.  If $\mu$ is linear, then $\chi^o = \chi_{\mu}^o$.
\end{theorem}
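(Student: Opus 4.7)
The plan is to follow the standard character triple and projective representation approach. Since $\mu \in \irr(D)$ is $G$-invariant, $(G, D, \mu)$ is a character triple in the sense of Isaacs. By the projective representation theorem (Isaacs, \emph{Character Theory of Finite Groups}, Ch.~11), there is a projective representation $\mathcal{P}$ of $G$ with factor set $\alpha$ constant on cosets of $D$ such that $\mathcal{P}|_D$ is an ordinary representation affording $\mu$. The Clifford correspondence for projective representations then gives a bijection between $\irr(G \mid \mu)$ and irreducible $\alpha^{-1}$-projective characters $\xi$ of $G/D$, with degrees related by $\chi(1) = \mu(1)\xi(1)$.

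Under this bijection, the relative defect zero condition $(\chi(1)/\mu(1))_p = |G:D|_p$ translates directly to $\xi(1)_p = |G/D|_p$, which is the projective analogue of defect zero. The next task is to identify these projective defect zero characters of $G/D$ with the ordinary defect zero characters of $G/D$. Here the hypothesis that $D$ is a $p$-group is essential: the values of $\alpha$ are $|D|$-th roots of unity, hence $p$-power roots, so $[\alpha] \in H^2(G/D, \mathbb{C}^\times)$ has $p$-power order. Passing to a central $p$-extension $\widetilde{G} \to G/D$ that trivializes $\alpha$, the $\alpha^{-1}$-projective characters of $G/D$ become ordinary characters of $\widetilde{G}$ lying over a fixed faithful linear character of the central kernel; because that kernel is a $p$-group, the defect-zero condition is preserved in both directions of the lifting, producing the desired bijection $\chi \mapsto \chi_\mu$.

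For the final assertion, suppose $\mu$ is linear. Then $\mathcal{P}$ can be normalized so that $\mathcal{P}(d) = \mu(d)\cdot I$ for all $d \in D$, and the character $\chi_\mu$ is given by an explicit formula in terms of $\chi$ (viewed as a function on $G/D$) involving a scalar correction built from $\mu$ along the coset decomposition. Because $D$ is a $p$-group, the only $p$-regular element of $D$ is the identity, where $\mu(1) = 1$. A direct computation of character values shows that on $p$-regular elements of $G$ this scalar correction is trivial, giving $\chi_\mu(g) = \chi(g)$ for $p$-regular $g$, hence $\chi^o = \chi_\mu^o$.

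The main obstacle I anticipate is the second paragraph: making the passage between projective and ordinary defect zero characters fully rigorous is a fiddly computation with cocycles and central extensions, and the $p$-group hypothesis on $D$ must be used at several points to control both the order of $[\alpha]$ and the behavior of $p$-parts of degrees under the central extension. The Brauer-character assertion in the linear case is comparatively routine once $\chi_\mu$ has been given explicitly, since the obstruction to extending $\mu$ is a purely $p$-local phenomenon and is invisible on $p$-regular classes.
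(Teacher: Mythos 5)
First, a point of orientation: the paper does not prove this statement at all --- it is quoted as ``a restatement of Theorem 2.1 of \cite{actions}'' and used as a black box --- so the comparison must be with Navarro's argument rather than with anything in this paper. Your first paragraph matches how that argument begins: $(G,D,\mu)$ is a character triple, the degrees in the Clifford/projective correspondence satisfy $\chi(1)=\mu(1)\xi(1)$, and because $D$ is a $p$-group the obstruction class in $H^2(G/D,\C^\times)$ has $p$-power order, so one may replace the triple by an isomorphic one $(\widetilde G, Z,\lambda)$ with $Z$ a central $p$-subgroup and $\lambda$ faithful linear; relative defect zero is preserved since the isomorphism preserves $G/D$ and the ratios $\chi(1)/\mu(1)$. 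The genuine gap is in your second paragraph, at the phrase ``the defect-zero condition is preserved in both directions of the lifting.'' After the reduction you must produce a bijection between the defect zero characters of $\widetilde G/Z$ and the characters of $\widetilde G$ lying over the \emph{faithful} character $\lambda$ whose degree has $p$-part $|\widetilde G:Z|_p$. The former inflate to $\irr(\widetilde G\mid 1_Z)$ while the latter lie in $\irr(\widetilde G\mid\lambda)$; these are disjoint sets of characters of $\widetilde G$, there is no ``lifting'' relating them, and so nothing is being ``preserved.'' This bijection \emph{is} the theorem in the central case, and it is exactly the step your outline omits. The standard way to supply it is block-theoretic: a defect zero block of $\widetilde G/Z$ is dominated by a unique block $B$ of $\widetilde G$, $B$ has the central defect group $Z$, and a block with central defect group contains exactly one irreducible character over each linear character of $Z$, all of height zero. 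That is real content, not ``a fiddly computation with cocycles,'' and it is where the hypothesis that $D$ (hence $Z$) is a $p$-group does its main work.

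The final assertion has a parallel problem. Your proposed ``explicit formula with a scalar correction built from $\mu$'' presupposes that $\chi_\mu$ is obtained from the inflation of $\chi$ by multiplying by something like $\mathrm{tr}\,\mathcal P$, which in effect assumes $\mu$ extends to $G$; a linear $G$-invariant $\mu$ need not extend, and no explicit formula for $\chi_\mu$ has been derived at that point of your argument. A workable route instead uses the block structure already in play: since $D$ is a normal $p$-subgroup, the dominating block of $G$ containing $\chi_\mu$ has the same (unique) irreducible Brauer character as the defect zero block of $G/D$ containing $\chi$, namely $\chi^o$, so $\chi_\mu^o=d\,\chi^o$ for some positive integer $d$; one must then show $d=1$, and comparing $p$-parts of degrees only shows that $d$ is a $p'$-number, so an additional argument is still required. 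In short, the skeleton of your reduction is sound and consistent with the standard proof, but both the existence of the bijection in the central case and the equality $\chi^o=\chi_\mu^o$ are asserted rather than proved, and these are precisely the nontrivial parts of the theorem.
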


We have not taken the time to generalize Theorem \ref{rdzlemma} from Brauer characters to $\pi$-partial characters.  It is for this reason that the results in this section and Section \ref{suff} are stated in terms of Brauer characters.  However, the results in this section and Section \ref{suff} are true for $\pi$-partial characters with $2 \in \pi$.

We will need to make use of character triple isomorphisms.  The definition and key results for character triple isomorphisms can be found in Definition 11.23 of \cite{text} and the results in Chapter 11 after that.  This next lemma gives a connection between lifts and character triple isomorphisms.   
The argument for this next lemma appeared in the proof of Corollary B of \cite{abvertex}.

\begin{lemma}\label{char trip}
Let $G$ be a $p$-solvable group, let $N$ be a normal $p'$-subgroup of $G$, and let $\theta \in \irr (N)$ be $G$-invariant.  Then $(G,N,\theta)$ is character triple isomorphic to $(G^*,N^*,\theta^*)$ where $N^*$ is a central, $p'$-subgroup of $G^*$.  Let $\chi \in \irr(G \mid \theta)$ correspond to $\chi^* \in \irr(G^* \mid \theta^*)$.  Then $\chi$ is a lift if and only if $\chi^*$ is a lift.  Furthermore, suppose $\psi$ is a lift of $\varphi \in \ibrg$, and let $\varphi^* = (\psi^*)^o$.  Then the number of lifts of $\varphi$ is equal to the number of lifts of $\varphi^*$.  
\end{lemma}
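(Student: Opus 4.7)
The plan is to combine three ingredients: the standard central reduction of character triples, the compatibility of this reduction with Brauer characters when the normal subgroup is a $p'$-group, and a direct counting argument at the end. I begin by invoking the existence theorem for character triple isomorphisms (Chapter 11 of Isaacs' text) to produce $(G^*, N^*, \theta^*)$ isomorphic to $(G, N, \theta)$ with $N^* \sbs Z(G^*)$. Because $|N|$ is coprime to $p$ and the Schur-multiplier/projective data entering the construction live in the $p'$-part of the relevant central extension (or, if need be, one factors out the $p$-part of $N^*$, which acts trivially since $\theta^*$ is linear on a $p'$-group), the group $N^*$ can be arranged to be a central $p'$-subgroup of $G^*$.

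For the equivalence ``$\chi$ is a lift iff $\chi^*$ is a lift,'' the key point is that when $N$ is a normal $p'$-subgroup and $\theta$ is $G$-invariant, the character triple isomorphism is compatible with the operation $\chi \mapsto \chi^o$. More precisely, the ordinary bijection $\irr(G \mid \theta) \to \irr(G^* \mid \theta^*)$ sits compatibly above a bijection $\ibr(G \mid \theta) \to \ibr(G^* \mid \theta^*)$, because the projective representations realizing the isomorphism can be chosen over a splitting $p$-modular system, so reduction mod $p$ commutes with the isomorphism. In particular, decomposition multiplicities are preserved, so $\chi^o$ is irreducible precisely when $(\chi^*)^o$ is irreducible, which is exactly the claim.

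For the final counting statement, I note that because $\theta$ is $G$-invariant and the given lift $\psi$ of $\varphi$ lies over $\theta$, Brauer--Clifford forces every lift of $\varphi$ to lie over $\theta$. The compatibility above then restricts to a bijection between $\{\chi \in \irrg : \chi^o = \varphi\}$ and $\{\chi^* \in \irr(G^*) : (\chi^*)^o = \varphi^*\}$, yielding the desired numerical equality.

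The main obstacle I expect is the Brauer-character compatibility invoked in the middle step: one has to be certain that ``being a lift'' is genuinely preserved under the central reduction. The cleanest route is to cite the modular/Brauer version of character triple isomorphisms (as developed in Navarro's modular representation theory book) rather than redoing the construction from scratch; a secondary care point is verifying that the central reduction can be carried out so that $N^*$ has $p'$-order, so that the Brauer-character machinery applies without any additional twisting by projective $p$-data.
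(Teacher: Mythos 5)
Your proposal reaches the right conclusions and shares the overall skeleton (central reduction, preservation of ``being a lift,'' then counting), but the middle step is handled by a genuinely different mechanism than the paper's. You argue that the character triple isomorphism can be chosen to be compatible with reduction modulo $p$ --- that the bijection $\irr(G\mid\theta)\to\irr(G^*\mid\theta^*)$ lies above a bijection of Brauer characters preserving decomposition multiplicities. That is true for a suitably constructed isomorphism (the factor set has $p'$-order since $N$ is a $p'$-group), but it is \emph{not} a formal consequence of the ordinary character triple axioms, so as written your argument leans on a modular strengthening that you only cite rather than verify; be aware that an arbitrary character triple isomorphism need not preserve lifts. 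The paper sidesteps all modular machinery: since $N$ is a normal $p'$-subgroup, $N$ is contained in a Hall $p$-complement $H$ of $G$, and by the linear independence of restrictions of irreducible Brauer characters of a $p$-solvable group to $H$, the statement ``$\chi^o$ is reducible'' is equivalent to the purely ordinary-character statement ``$\chi_H=\alpha_H+\beta_H$ for some characters $\alpha,\beta$,'' which any character triple isomorphism transports to $G^*$ because $H\supseteq N$; the counting statement follows the same way from ``$\chi$ is a lift of $\varphi$ iff $\chi_H=\varphi_H$.'' Your route buys generality (it is the standard modern approach via modular character triples), while the paper's buys self-containedness from the ordinary theory plus Theorem 5.2 of \cite{pipart}, which also supplies directly --- rather than by your somewhat heuristic Schur-multiplier argument --- that $N^*$ can be taken to be a central $p'$-subgroup. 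Your observation that every lift of $\varphi$ lies over $\theta$ is correct and unproblematic.
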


\begin{proof}
By Theorem 5.2 of \cite{pipart}, there is a character triple
$(G^*,N^*,\theta^*)$ which is isomorphic to $(G,N,\theta)$ and where $N^*$ is a central, $p'$-subgroup.  Take $H$ to be a Hall $p$-complement of $G$.  Let $H^*$ correspond to $H$, and note that $H^*$ is a Hall $p$-complement of $G^*$.  By the Fong-Swan theorem, $\chi^o$ is not irreducible if and only if there there exist characters $\alpha, \beta$ such that $\chi^o = \alpha^o + \beta^o$.  This occurs if and only if $\chi_H = \alpha_H + \beta_H$.  Using the character triple isomorphism, this
is equivalent to $(\chi^*)_{H^*} = (\alpha^*)_{H^*} +
(\beta^*)_{H^*}$ and to $(\chi^*)^o = (\alpha^*)^o + (\beta^*)^o$.
We conclude that $\chi^o$ is irreducible if and only if $(\chi^*)^o$
is irreducible. 

Suppose $\psi$ is a lift of $\varphi$, then we define
$\varphi^* = (\psi^*)^o$. Notice that $\chi \in \irr(G)$ is a lift of $\varphi$ if and only if $\chi_H = \varphi_H$.   It follows that $\chi$ is a lift of $\varphi$ if and only if $\chi^*$ is a lift of $\varphi^*$, and so, the number of lifts of $\varphi$ equals the number of lifts of $\varphi^*$.
\end{proof}

We will also need to understand how vertex pairs behave with respect to the character triple isomorphism.  (We will need the basic results about $p$-special, $p'$-special, and $p$-factorable characters, see \cite{bpi} for example.) 

\begin{lemma} \label{char trip pair}  Let $G$ be a $p$-solvable group and let $N$ be a normal $p'$-subgroup of $G$, and suppose $\theta \in \irr(N)$ is $G$-invariant.  Let $(G^*, N^*, \theta^*)$ be an isomorphic character triple where $N^*$ is a central $p'$-subgroup of $G^*$.  If $\chi \in \irr(G | \theta)$ is a lift with vertex pair $(Q, \delta)$, then there exists a subgroup $Q^* \cong Q$ of $G^*$ and a character $\delta^* \in \irr(Q^*)$ such that $(Q^*, \delta^*)$ is a vertex pair for $\chi^*$.  Moreover, $\delta$ is invariant in $\ngq$ if and only if $\delta^*$ is invariant in $\norm_{G^*}(Q^*)$.
\end{lemma}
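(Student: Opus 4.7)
The plan is to transport the vertex pair through the character triple isomorphism in a structural way. Write $\sigma\colon G/N \to G^*/N^*$ for the group isomorphism underlying the character triple isomorphism, and for each intermediate subgroup $N\le H\le G$ let $\sigma_H\colon \irr(H\mid\theta)\to \irr(H^*\mid\theta^*)$ be the associated bijection, where $H^*$ denotes the preimage in $G^*$ of $\sigma(H/N)$. First I would construct $Q^*$: since $N$ is a $p'$-group and $Q$ is a $p$-group, $QN/N\cong Q$ is a $p$-subgroup of $G/N$, so its preimage $R^*$ in $G^*$ contains $N^*$ with $R^*/N^*$ a $p$-group. Because $N^*$ is a central $p'$-subgroup, Schur--Zassenhaus produces a unique $p$-complement $Q^*\le R^*$, giving a canonical isomorphism $\iota\colon Q\to Q^*$. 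Set $\delta^*:=\delta\circ\iota^{-1}\in\irr(Q^*)$.

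To verify that $(Q^*,\delta^*)$ is a vertex pair for $\chi^*$, I would induct on $|G:N|$, running the recursive construction of a vertex pair (Section~\ref{pairs}) in parallel on both sides. At each stage one either descends to a proper intermediate subgroup $N\le H\lneq G$, which is handled by applying the induction hypothesis to the restricted character triple isomorphism $(H,N,\theta)\to(H^*,N^*,\theta^*)$, or one reads the vertex pair directly off a $p$-factorization of $\chi$. The key compatibility is that $\sigma$ preserves induction and multiplies character degrees by the fixed ratio $\theta^*(1)/\theta(1)$; since both $\theta$ and $\theta^*$ are $p'$-special (the latter because $N^*$ is a central $p'$-subgroup), this ratio is a $p'$-number. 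Hence $p$-parts of degrees match across $\sigma_H$, and $p$-factorable characters correspond to $p$-factorable characters with matching $p$-special parts. Tracking the recursion produces a vertex pair for $\chi^*$, and Theorem~\ref{thmA} lets us $G^*$-conjugate it into the form $(Q^*,\delta^*)$.

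For the invariance clause, a Frattini argument using that $Q$ is the Hall $p$-subgroup of $QN$ gives $N_G(Q)\cdot N/N = N_{G/N}(QN/N)$; the analogous identity in $G^*$ is automatic because $Q^*N^* = Q^*\times N^*$. Applying $\sigma$, we see that $N_G(Q)\cdot N/N$ corresponds to $N_{G^*}(Q^*)\cdot N^*/N^*$, and the induced conjugation actions on $Q$ and on $Q^*$ agree via $\iota$. Therefore $\delta$ is $\ngq$-invariant if and only if $\delta^*$ is $\norm_{G^*}(Q^*)$-invariant. The main obstacle lies in Step~2: matching the $\delta^*$ produced by the recursion with $\delta\circ\iota^{-1}$ requires checking that $\sigma_{QN}$ sends the unique $p'$-special extension of $\theta$ to $QN$ (which exists by Gajendragadkar, since $QN/N$ is a $p$-group) to the corresponding extension of $\theta^*$ to $Q^*\times N^*$, and then propagating this compatibility through each step of the induction.
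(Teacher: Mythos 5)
Your construction of $Q^*$ via Schur--Zassenhaus is fine, but the proof has a genuine gap exactly where you flag it: you \emph{define} $\delta^* := \delta\circ\iota^{-1}$ and then must show that this particular character is the one the transported nucleus actually produces. Nothing in the axioms of a character triple isomorphism forces the bijections $\irr(H\mid\theta)\to\irr(H^*\mid\theta^*)$ to be induced by a group isomorphism on a complement of $N^*$; in particular, the $p$-special factor of the image of $\widehat{\theta}\epsilon$ (where $\epsilon$ is the unique $p$-special extension of $\delta$ to $QN$), restricted to $Q^*$, need not equal $\delta\circ\iota^{-1}$. The appeal to Theorem \ref{thmA} does not close this gap: that theorem says all vertex pairs of a lift are mutually conjugate, so it can move the pair your recursion produces onto \emph{some} vertex pair, but it cannot certify that the target is your prescribed $(Q^*,\delta\circ\iota^{-1})$ --- that would presuppose the very fact being proved. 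Since your invariance argument (``the conjugation actions on $Q$ and $Q^*$ agree via $\iota$'') also leans on the identification $\delta^*=\delta\circ\iota^{-1}$, it inherits the same gap. Note also that the lemma only asserts the \emph{existence} of some $\delta^*$, so you have made the problem strictly harder than necessary.

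The fix --- and the paper's route --- is not to prescribe $\delta^*$ in advance. Take a nucleus $(U,\alpha\beta)$ witnessing $(Q,\delta)$, observe that $\alpha\beta\in\irr(U\mid\theta)$ (since $\alpha$ lies over $\theta$ and $N$ is in the kernel of $\beta$), transport it to a factored character $(\alpha\beta)^*=\alpha_1\beta_1\in\irr(U^*\mid\theta^*)$ inducing $\chi^*$, and simply read off $\delta^*:=(\beta_1)_{Q^*}$ for $Q^*$ a Sylow $p$-subgroup of $U^*$. This makes the first assertion immediate, with no induction on $|G:N|$ and no use of Theorem \ref{thmA}. The invariance clause is then handled by passing from $\delta$ to its unique $p$-special extension $\epsilon$ to $QN$ and to the character $\widehat{\theta}\epsilon\in\irr(QN\mid\theta)$: invariance of $\delta$ under $\norm_G(Q)$ is equivalent to invariance of $\widehat{\theta}\epsilon$ under $\norm_G(QN)$, which the character triple isomorphism preserves, and unwinding the corresponding equivalences on the starred side yields the statement about $\norm_{G^*}(Q^*)$.
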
 

\begin{proof}  Suppose that $\chi \in \irrg$ is a lift with vertex pair $(Q, \delta)$.  Then there is a subgroup $U$ containing $QN$ and a factorable character $\alpha \beta$ of $U$ (where $\alpha$ is $p'$-special and $\beta$ is $p'$-special) that induces $\chi$, and $Q$ is a Sylow $p$-subgroup of $U$ and $\beta_Q = \delta$.  Notice that $\alpha$ lies over $\theta$ and $N$ is in the kernel of $\beta$, so $\alpha \beta \in \irr(U | \theta)$.  Thus $(\alpha \beta)^* \in \irr(U^* | \theta^*)$ is a factorable character that induces $\chi^*$.  Write $(\alpha \beta)^* = \alpha_1 \beta_1$ (where $\alpha_1$ is $p'$-special and $\beta_1$ is $p$-special), let $Q^*$ be a Sylow $p$-subgroup of $U^*$, and write $\delta^* = (\beta_1)_{Q^*}$.  Then $Q \cong Q^*$, and $(Q^*, \delta^*)$ is a vertex pair for $\chi^*$.

To complete the proof, we show that with the above notation, $\delta$ is invariant in $\ngq$ if and only if $\delta^*$ is invariant in $\norm_{G^*}(Q^*)$.  Notice that $\delta$ has a unique $p$-special extension $\epsilon \in \irr(QN)$, and that $\delta$ is invariant in $\ngq$ if and only if $\epsilon$ is invariant in $\norm_G(QN)$.  Let $\widehat{\theta}$ denote the unique $p'$-special extension of $\theta$ to $QN$, and note that $\widehat{\theta} \epsilon \in \irr(QN | \theta)$.  Moreover, $\alpha \beta \in \irr(U)$ lies over $\widehat{\theta} \epsilon$.  Now $\delta$ is invariant in $\ngq$ if and only if $\widehat{\theta} \epsilon$ is invariant in $\norm_G(QN)$, which occurs if and only if $(\widehat{\theta} \epsilon)^*$ is invariant in $\norm_{G^*}((QN)^*)$ (by the properties of a character triple isomorphism), which occurs if and only if the $p$-special factor $\epsilon_1$ of $(\widehat{\theta} \epsilon)^*$ is invariant in $\norm_{G^*}((QN)^*)$.  Note that $\epsilon_1$ necessarily restricts to a vertex character for $\chi^*$ obtained from $(U^*, (\alpha \beta)^*)$, so $(\epsilon_1)_{Q^*} = \delta^*$.  Finally, note that $\epsilon_1$ is invariant in $\norm_{G^*}((QN)^*)$ if and only if $\delta^*$ is invariant in $\norm_{G^*}(Q^*)$, and we have proven the lemma.

\end{proof}

We now turn to Theorem \ref{first}, and in fact, we prove more. Note that in the statement of the following theorem, we do not assume that the vertex subgroup $Q$ is abelian, only that a relevant subgroup of $Q$ is abelian.  Also, since $p$ is odd, then the vertex character for the lift $\chi$ is necessarily linear.

\begin{theorem}\label{normal}  Let $G$ be $p$-solvable where $p$ is odd, and suppose $M \nrml G$.  Let $\chi \in \irr(G)$ be a lift of $\varphi \in \ibrg$ with vertex pair $(Q, \delta)$, and write $P = Q \cap M$ and $\lambda = \delta_{P}$.  If $P$ is abelian and $\lambda$ is invariant in $\nmp$, then the constituents of $\chi_M$ are lifts.  Moreover, if $\lambda$ is invariant in $\ngp$ and $\psi$ is a constituent of $\chi_M$, then $G_{\psi} = G_{\psi^o}$.
\end{theorem}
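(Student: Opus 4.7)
The plan is to identify a distinguished constituent of $\chi_M$ using Theorem \ref{thmB}, then to leverage its uniqueness. By Lemma \ref{normlemma}(a), fix a constituent $\theta$ of $\varphi_M$ with vertex $P$, and apply Theorem \ref{thmB} inside $M$ (valid since $P$ is abelian and $\lambda$ is invariant in $\nmp$) to obtain a unique lift $\psi_0 \in \irr(M)$ of $\theta$ with vertex pair $(P, \lambda)$. By Lemma \ref{normlemma}(b), some constituent $\psi$ of $\chi_M$ has vertex pair $(P, \lambda)$. It suffices to prove $\psi = \psi_0$, for then $\psi$ is a lift and Clifford's theorem gives that every constituent of $\chi_M$, being a $G$-conjugate of $\psi$, is also a lift.

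The identification $\psi = \psi_0$ I would prove by induction on $|G|$, with base case $G = M$ trivial. Let $T = G_\theta$. When $T < G$, pass to the Clifford correspondent $\chi_T \in \irr(T \mid \theta)$ of $\chi$ over $\theta$; by the Clifford correspondence for lifts in $p$-solvable groups (see \cite{abvertex}), $\chi_T$ is a lift of the Brauer Clifford correspondent $\varphi_T$ of $\varphi$ and may be arranged to have the same vertex pair $(Q, \delta)$, so applying induction to $(T, M, \chi_T)$ shows the constituents of $(\chi_T)_M$ are lifts. Since $\chi = \chi_T^G$ and $M \nrml G$, the constituents of $\chi_M$ are $G$-conjugates of those of $(\chi_T)_M$, whence each is a lift and by uniqueness of $\psi_0$ we obtain $\psi = \psi_0$. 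The harder case $T = G$ (i.e., $\theta$ is $G$-invariant) requires an iterated reduction: pick a normal $p'$-subgroup $K \nrml G$ with $K \leq M$ (for example a suitable subgroup of $\opd(M)$) and an irreducible constituent $\eta$ of $\theta_K$. If $G_\eta < G$, Clifford-reduce further; otherwise $\eta$ is $G$-invariant, and Lemmas \ref{char trip} and \ref{char trip pair} provide a character triple isomorphism with $(G^*, K^*, \eta^*)$ where $K^*$ is central, preserving lifts, vertex pairs, and the invariance of $\lambda$. Theorem \ref{rdzlemma} then identifies the relative defect zero characters $\rdz(G^* \mid \eta^*)$ (which contain $\chi^*$ and the image of $\psi_0^*$) with defect zero characters of an appropriate $p$-quotient, and matching vertex pairs on this side pins down $\psi^* = \psi_0^*$, hence $\psi = \psi_0$.

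For the moreover statement, let $\psi$ be a constituent of $\chi_M$; since the claimed equality is preserved by $G$-conjugation, Clifford lets us assume $\psi = \psi_0$, so $\psi^o = \theta$. The inclusion $G_\psi \subseteq G_{\psi^o} = G_\theta$ is immediate. For the converse, take $g \in G_\theta$; then $P^g$ is a vertex of $\theta^g = \theta$, so $M$-uniqueness of vertices yields $m \in M$ with $P^{gm} = P$, and the hypothesis that $\lambda$ is invariant in $\ngp$ gives $\lambda^{gm} = \lambda$. Then $\psi^{gm}$ is a lift of $\theta^{gm} = \theta$ with vertex pair $(P, \lambda)$, forcing $\psi^{gm} = \psi_0 = \psi$ by the uniqueness in Theorem \ref{thmB}. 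Since $m \in M$ acts trivially on $\irr(M)$, $\psi^g = \psi$, so $g \in G_\psi$, giving $G_\theta \subseteq G_\psi$. The principal obstacle throughout is the $G$-invariant case $T = G$ in the first part, where a direct Clifford reduction fails to shrink the ambient group and one must combine the character triple isomorphism machinery of Lemmas \ref{char trip} and \ref{char trip pair} with the relative defect zero description of Theorem \ref{rdzlemma} to explicitly identify the lifts of $\varphi$ lying over $\theta$ and pick out the one with vertex pair $(P, \lambda)$.
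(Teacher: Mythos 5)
Your framing---construct the distinguished lift $\psi_0$ of $\theta$ by applying Theorem \ref{thmB} inside $M$, then identify the constituent $\psi$ of $\chi_M$ having vertex pair $(P,\lambda)$ with $\psi_0$---is a reasonable reorganization, and your argument for the ``moreover'' statement is essentially correct (it is the paper's Frattini argument recast: the paper writes $G = M\norm_G(P,\lambda)$ and uses the uniqueness in Theorem \ref{thmB} exactly as you do). The first gap is in your reduction when $T = G_\theta < G$: you ``pass to the Clifford correspondent $\chi_T \in \irr(T\mid\theta)$ of $\chi$ over $\theta$,'' but $\theta$ is a Brauer character of $M$ and $M$ is not a $p'$-group, so there is no Clifford correspondence for the ordinary character $\chi$ over $\theta$. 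The ordinary Clifford correspondents of $\chi$ live over ordinary constituents $\psi$ of $\chi_M$, with stabilizers $G_\psi$ rather than $G_\theta$, and the assertion that some lift of the Brauer Clifford correspondent $\varphi_T$ induces to $\chi$ with the same vertex pair is precisely the kind of statement this circle of results is trying to establish---it cannot be assumed. The paper instead reduces first over an ordinary constituent $\psi$ of $\chi_M$ (passing to $G_\psi$ when it is proper), and then over an ordinary constituent $\alpha$ of $\chi_N$ with $N = \opd(M)$, where ordinary and Brauer characters of $N$ coincide so the reduction is compatible with both levels; a Frattini argument ($G = MG_\alpha$) recovers the statement for $M$ from the statement for $M_\alpha$.

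The second gap is in the endgame. Theorem \ref{rdzlemma} requires the normal subgroup to be a $p$-subgroup $D$ with an invariant character of $D$; you invoke it for $\rdz(G^*\mid\eta^*)$ where $\eta^*$ is a character of the central $p'$-subgroup $K^*$, which is not the setting of that theorem (and $\chi^*$ is not of relative defect zero over $\eta^*$ unless $\varphi$ has trivial vertex). The correct application is inside $M$: after the character triple reduction one shows, via the Hall--Higman lemma and the hypothesis that $P$ is abelian, that $P = {\bf O}_p(M)$ is normal in $M$ (this is the only place beyond Theorem \ref{thmB} where the abelian hypothesis enters, and your sketch never produces this normality), whence $\lambda$ is $M$-invariant and Theorem \ref{rdzlemma} with $D = P$, $\mu = \lambda$ identifies $\rdz(M\mid\lambda)$ with the defect zero characters of $M/P$ compatibly with restriction to $p$-regular elements. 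Since $\psi$ has vertex pair $(P,\lambda)$ with $\lambda$ linear and $P\nrml M$, one gets $\psi \in \rdz(M\mid\lambda)$ and hence $\psi^o$ irreducible. Without the normality of $P$ the relative defect zero machinery cannot be brought to bear, so as written your identification $\psi = \psi_0$ is not established.
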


\begin{proof}  
To prove the first statement, it is enough to show that some constituent of $\chi_M$ is a lift.  There is some constituent $\psi$ of $\chi_M$ such that the Clifford correspondent $\rho$ of $\chi$ in $\irr(G_{\psi} \mid \psi)$ has vertex pair $(Q, \delta)$.  Notice that by part (b) of Lemma \ref{normlemma}, $\psi$ has vertex pair $(P, \lambda)$.  If $G_{\psi}$ is proper in $G$, then by induction, we see that $\psi$ is a lift.

Thus, we may assume that $\psi$ is invariant in $G$, and note that this implies, by part (b) of Lemma \ref{normlemma}, that $\psi$ has vertex $(P, \lambda)$.  Let $N = \opd(M)$ and note that $N \nrml G$.  Let $\alpha$ be a constituent of $\chi_N$, and thus also of $\psi_N$.  Let $T = G_{\alpha}$ and assume that $T < G$.  By replacing $\alpha$ by a conjugate if necessary, we may assume that the Clifford correspondent $\mu \in \irr(T \mid \alpha)$ of $\chi$ has vertex pair $(Q, \delta)$.  Note that a Frattini argument (since $\psi$ is invariant in $G$) shows that $G = M G_{\alpha}$.  By induction, we see that the unique constituent $\nu$ of $\mu_{N_{\alpha}}$ is a lift, and necessarily lies over $\alpha$.  Since $\nu^o \in \ibr(N_{\alpha} \mid \alpha)$ induces irreducibly to $N$, then $\psi = \nu^N$ is a lift.

Therefore we may assume that $\alpha$ is invariant in $G$.  We may now use Lemma \ref{char trip} and Lemma \ref{char trip pair} to replace the triple $(G, N, \alpha)$ with an isomorphic character triple without losing the information about the lifts or their vertex pairs.  Thus, we may assume that $N$ is a central $p'$-subgroup of $G$ (and hence, also central in $M$).  Also, note that by part (a) of Lemma \ref{normlemma}, some constituent $\theta$ of $\varphi_M$ has vertex subgroup $P$.

Let $K \supseteq N$ be such that $K/N = \op(M/N)$.  Since $N$ is a central $p'$-subgroup in $M$, then $K = N \times S$, where $S = \op(M)$.  Since $P$ is a vertex subgroup of $\theta$, then $S \subseteq P$.  Also, since $P$ is abelian, then $P \subseteq \cent_M(S)$, so $PN/N \subseteq \cent_{M/N}(SN/N) \subseteq SN/N$, where the last containment is by the Hall-Higman lemma.  Therefore, $P \subseteq S$, and thus $P = S \nrml M$ and by assumption, $\lambda$ is invariant in $M$.  By Theorem \ref{rdzlemma}, since $\theta$ is a Brauer character of $M$ with vertex $P$, there is a unique character in $\rdz(M \mid \lambda)$ that lifts $\theta$.  However, we know that $\psi$ has vertex $(P, \lambda)$, and thus $\psi \in \rdz(M \mid \lambda)$, and therefore $\psi$ is a lift of $\theta$.

To prove the second statement, notice that we have shown that $\psi$ has vertex pair $(P, \lambda)$, and thus by Theorem \ref{thmB}, $\psi$ is the unique lift of $\psi^o$ with vertex pair $(P, \lambda)$.  It is clear that $G_{\psi} \subseteq G_{\psi^o}$.  To prove the reverse containment, we may without loss of generality assume that $\psi^o$ is invariant in $G$, and prove that $\psi$ is invariant in $G$.  Note that by a Frattini argument, $G = M \ngp$.  Since we are assuming $\ngp$ stabilizes $\lambda$, then $G = M \norm_G(P, \lambda)$.  Let $g \in G$, and write $g = mn$, where $m \in M$ and $n \in \norm_G(P, \lambda)$.  Then $\psi^g = \psi^n$.  But $\psi^n$ is a lift of $(\psi^o)^n = \psi^o$ and has vertex pair $(P, \lambda)^n = (P, \lambda)$, and thus, by the uniqueness in Theorem \ref{thmB}, we see that $\psi^n = \psi$, and therefore $\psi^g = \psi$ and $\psi$ is invariant in $G$.
\end{proof}

\section{Theorem \ref{thmB}} \label{suff}

In this section we prove Theorem \ref{thmB} (using Theorem \ref{thmA}).  We note that the proof of Theorem \ref{thmB} bears many similarities with the proof of Theorem \ref{normal}.
  
Before beginning the proof, we show that it can certainly be the case that there are irreducible characters of a vertex subgroup $Q$ that are not vertex characters of lifts.  Let $p=7$ and let $Q$ have order $7$, and let $T$ have order $3$ and act nontrivially on $Q$, and let $G = Q \rtimes T$ be the semidirect product.  Let $\varphi \in \ibr(G/Q)$ be nontrivial, and note that $\varphi$ must be linear and $\varphi$ has vertex subgroup $Q$.  If $\delta \in \irr(Q)$ is nontrivial, then any character of $G$ lying over $\delta$ must have degree $3$ and thus cannot be a lift of $\varphi$.  Thus there are no lifts of $\varphi$ with vertex pair $(Q, \delta)$.  This example shows that we cannot remove the hypothesis that $\delta$ is invariant in $\ngq$ from Theorem \ref{thmB}.

We now prove Theorem \ref{thmB} of the introduction.

\begin{proof} [Proof of Theorem \ref{thmB}]
Let $N = \opd(G)$, and let $\alpha \in \irr(N)$ be a constituent of $\varphi_N$.  Take $T$ to be the stabilizer of $\alpha$, and suppose that $T < G$.  Replacing $\alpha$ by a conjugate if necessary, we may assume that $Q$ is a vertex subgroup for the Clifford correspondent $\eta \in \ibr(T \mid \alpha)$ of $\varphi$.  Clearly, the hypotheses are inherited by $\eta$ in $\ibr(T)$, and thus, by induction, there is a unique lift $\psi \in \irr(T)$ of $\eta$ with vertex pair $(Q, \delta)$.  Note that $\psi^G \in \irrg$, and $\varphi = \eta^G = (\psi^o)^G = (\psi^G)^o$, and thus $\psi^G$ is a lift of $\varphi$.  By Theorem \ref{thmA}, any vertex pair for $\psi$ is a vertex pair for $\psi^G$, and thus $(Q, \delta)$ is a vertex pair of $\psi^G$.

We still need to show that $\psi^G$ is the unique such lift of $\varphi$.  Suppose $\chi_1$ and $\chi_2$ are lifts of $\varphi$ with vertex pair $(Q, \delta)$.  Again, choose $\alpha \in \ibr(N)$ so that the Clifford correspondent $\eta$ of $\varphi$ has vertex subgroup $Q$.  Now the Clifford correspondents $\psi_1$ and $\psi_2$ of $\chi_1$ and $\chi_2$ have vertex pairs $(Q, \delta_1)$ and $(Q, \delta_2)$, respectively.  In light of Theorem \ref{thmA}, $\delta_1$ and $\delta_2$ are conjugate to $\delta$ via elements in $\ngq$.  By assumption, $\ngq$ stabilizes $\delta$, and thus $\delta_1 = \delta_2 = \delta$.  Therefore, $\psi_1$ and $\psi_2$ are lifts of $\eta$ with vertex pair $(Q, \delta)$, and thus, by induction, $\psi_1 = \psi_2$.  We conclude that $\chi_1 = \chi_2$.

Therefore, we may assume that $\alpha$ is invariant in $G$.  Using Lemma \ref{char trip} and Lemma \ref{char trip pair}, we may replace $(G,N,\alpha)$ using a character triple isomorphism without losing information about the lifts or their vertex pairs, and thus, we may assume that $N$ is a central $p'$-subgroup of $G$.  Let $K \supseteq N$ be such that $K/N = \op(G/N)$.  Since $N$ is a central $p'$-subgroup in $G$, then $K = N \times P$, where $P = \op(G)$.  Since $Q$ is a vertex subgroup of $\varphi$, then $P \subseteq Q$.  Also, since $Q$ is abelian, then $Q \subseteq \cent_G(P)$, so $QN/N \subseteq \cent_{G/N}(PN/N) \subseteq PN/N$, where the last containment is by the Hall-Higman lemma.  Therefore, $Q \subseteq P$, and thus $Q = P \nrml G$ and therefore by assumption, $\delta$ is invariant in $G$.

Now, $\varphi \in \ibrg$ has a normal vertex subgroup $Q$, and we may view $\varphi$ as a Brauer character of $G/Q$ which has defect zero.  Thus, the unique character $\chi \in \irr(G/Q)$ that lifts $\varphi$ has defect zero.  Applying Theorem \ref{rdzlemma}, there is a unique character $\chi_{\delta} \in \rdz(G \mid \delta)$ that lifts $\varphi$.  Since any lift of $\varphi$ that has vertex $(Q, \delta)$ must lie above $\delta$ and have relative defect zero, we are done.
\end{proof}

\section{Generalized vertices} \label{pairs}

In this section we will prove Theorem \ref{thmA}.  Rather than work with Brauer characters, in this section we work in the context of Isaacs' partial characters to prove a slightly more general result.  Hence, we will have a set of primes $\pi$.  To define the $\pi$-partial characters, one needs to assume that $G$ is $\pi$-separable.  As in the context of Brauer characters, we let $G^o$ denote the set of $\pi$-elements in $G$.  Given an ordinary character $\chi$, we use $\chi^o$ to denote the restriction of $\chi$ to $G^o$.  The $\pi$-partial characters of $G$ are the functions defined on $G^o$ that are restrictions of ordinary characters.  The $\pi$-partial characters that cannot be written as the sum of two other partial characters are called irreducible.  We use $\ipig$ to denote the irreducible $\pi$-partial characters of $G$.  For a full exposition on $\pi$-partial characters, we refer the reader to \cite{bpi} and \cite{pipart}.

The irreducible $\pi$-partial characters of $G$ have many properties in common with the irreducible Brauer characters of a $p$-solvable group.  (In fact, if $\pi = p'$, then $\ipig = \ibrg$, and the requirement that $p$ is odd is equivalent to $2 \in \pi$.)  For example, we can define induction of partial characters from subgroups in the same way one defines induction of Brauer characters.  Given an irreducible $\pi$-partial character $\varphi$ of $G$, we can define a vertex $Q$ for $\varphi$ to be a Hall $\pi'$-subgroup of a subgroup $U$ that contains a $\pi$-partial character $\kappa$ of $\pi$-degree that induces $\varphi$.  Isaacs and Navarro proved in \cite{IsNa} that all of the vertices for $\varphi$ are conjugate in $G$.  (A different proof of this fact is in \cite{istan}.)  There also exists a Clifford correspondence for $\pi$-partial characters.  If $G$ is $\pi$-separable and $N \nrml G$ and $\theta \in \ipi(N)$, then induction is a bijection from the set $\ipi(G_{\theta} \mid \theta)$ to $\ipi(G \mid \theta)$ (see \cite{Fong}).

We also need to consider $\pi$-special characters.  Let $G$ be a $\pi$-separable group.  A character $\chi \in \irrg$ is $\pi$-special if $\chi (1)$ is a $\pi$-number and for every subnormal group $M$ of $G$, each irreducible constituent of $\chi_M$ has determinantal order that is a $\pi$-number.  Many of the basic results of $\pi$-special characters can be found in Section 40 of \cite{hupte} and Chapter VI of \cite{Mawo}.  One result that is proved is that if $\alpha$ is $\pi$-special and $\beta$ is $\pi'$-special, then $\alpha \beta$ is necessarily irreducible. Furthermore, if $\alpha'$ is $\pi$-special and $\beta'$ is $\pi$-special so that $\alpha' \beta' = \alpha \beta$, then $\alpha' = \alpha$ and $\beta' = \beta$.  We say that $\chi$ is $\pi$-factored (or factored, if the $\pi$ is clear from context) if $\chi = \alpha \beta$, where $\alpha$ is $\pi$-special and $\beta$ is $\pi'$-special.  Another result is that if $H$ is a Hall $\pi$-subgroup of $G$, then restriction defines an injection from the $\pi$-special characters of $G$ into $\irr (H)$.

Following the terminology introduced in \cite{vertodd}, we say $(Q,\delta)$ is a {\it generalized $\pi$-vertex} for $\chi \in \irrg$ if there exists a pair $(U,\psi)$ (where $U \subseteq G$ and $\psi \in \irr(U)$) so that $\psi^G = \chi$, $Q$ is a Hall $\pi$-complement of $U$, $\psi = \alpha \beta$ where $\alpha$ is $\pi$-special and $\beta$ is $\pi'$-special, and $\beta_Q = \delta$.  In this context, we say that $(U,\psi)$ is a {\it generalized $\pi$-nucleus} for $\chi$.

In \cite{vertodd}, the first author proved that if $|G|$ is odd and $\chi \in \irrg$ is such that $\chi^o \in \ipig$, then the generalized $\pi$-vertices for $\chi$ are conjugate. We now show that the hypothesis that $|G|$ is odd can be replaced by the hypothesis that $G$ is $\pi$-separable and $2 \in \pi$.  Our argument will parallel the argument in \cite{vertodd}.

The main result, which is the $\pi$-version of Theorem \ref{thmA} is the following.

\begin{theorem}\label{second main}
Let $\pi$ be a set of primes with $2 \in \pi$, and let $G$ be a $\pi$-separable group. If $\chi \in \irrg$ is such that $\chi^o \in \ipig$, then all of the generalized $\pi$-vertices for $\chi$ are conjugate.
\end{theorem}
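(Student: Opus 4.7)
The plan is to follow the inductive argument of \cite{vertodd}, with the hypothesis $|G|$ odd there replaced by $2 \in \pi$ here---so that every $\pi'$-subgroup of $G$, in particular any vertex $Q$ of $\chi^o$, has odd order. I would induct on $|G|$. Let $(Q_1,\delta_1)$ and $(Q_2,\delta_2)$ be two generalized $\pi$-vertices of $\chi$, coming from generalized $\pi$-nuclei $(U_1,\alpha_1\beta_1)$ and $(U_2,\alpha_2\beta_2)$. Since each $Q_i$ is a vertex subgroup of $\chi^o \in \ipig$, the conjugacy of $\pi$-vertices for $\pi$-partial characters (Isaacs--Navarro) implies that $Q_1$ and $Q_2$ are $G$-conjugate; after replacing one nucleus by a suitable conjugate, we may assume $Q_1 = Q_2 = Q$, and the task becomes finding an element of $\ngq$ conjugating $\delta_1$ to $\delta_2$.

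For the inductive step I would carry out a Clifford reduction over the normal $\pi$-subgroup $N := \opi(G)$, closely paralleling the proofs of Theorem \ref{normal} and Theorem \ref{thmB}. Let $\alpha \in \irr(N)$ be a constituent of $\chi_N$ and set $T := G_\alpha$. When $T < G$, each $\psi_i = \alpha_i\beta_i$ lies over some $G$-conjugate of $\alpha$, and after further conjugating both nuclei we may assume $\psi_1$ and $\psi_2$ both lie over $\alpha$; this forces $U_i \sbs T$. A standard Frobenius reciprocity argument then gives $(\psi_1)^T = (\psi_2)^T =: \eta$, the unique Clifford correspondent of $\chi$ above $\alpha$; moreover $\eta$ is a lift of the Clifford correspondent of $\chi^o$ in $\ipi(T)$, and each $(U_i,\psi_i)$ is a generalized $\pi$-nucleus for $\eta$ with associated vertex pair $(Q,\delta_i)$. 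Induction applied inside the proper subgroup $T$ then produces an element of $\norm_T(Q) \sbs \ngq$ conjugating the two pairs. When instead $\alpha$ is $G$-invariant, I would use the $\pi$-analogues of Lemma \ref{char trip} and Lemma \ref{char trip pair}---whose proofs adapt verbatim upon replacing $p'$ by $\pi$ throughout, using Theorem 5.2 of \cite{pipart}---to pass to an isomorphic character triple $(G^*,N^*,\alpha^*)$ with $N^*$ a central $\pi$-subgroup. Because this replacement preserves lifts, generalized $\pi$-vertex pairs, and the $\ngq$-conjugacy relation between them, we may assume that $\opi(G)$ is central in $G$.

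In this reduced setting the remaining task---and the main obstacle of the proof---is to produce, under the hypothesis that $\opi(G)$ is central and $\chi^o$ is irreducible, an element of $\ngq$ carrying $\delta_1$ to $\delta_2$. Unlike Theorem \ref{thmB}, here $Q$ is not assumed abelian, so Theorem \ref{rdzlemma} cannot be invoked directly. I would instead mirror the terminal argument of \cite{vertodd}, showing that each $\beta_i$ (and hence $\delta_i = (\beta_i)_Q$) is determined, up to nucleus conjugation, by $\chi$ together with the choice of Hall $\pi$-complement $Q$. The hypothesis $2 \in \pi$ enters at precisely this point: because the $\pi'$-groups in play have odd order, the extension theory of $\pi'$-special characters on them is rigid enough to replace the rigidity supplied by $|G|$ odd in \cite{vertodd}. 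The technical crux is verifying that every invocation of $|G|$ odd in the original argument can be localized to an appeal about the odd order of the $\pi'$-subgroups that actually appear.
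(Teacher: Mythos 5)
Your proposal has a genuine gap at its core: the final paragraph, where you propose to ``mirror the terminal argument of \cite{vertodd}'' after reducing to the case that $\opi(G)$ is central, is precisely where the entire content of the theorem lives, and you do not supply that argument. The paper's proof does not centralize $\opi(G)$ at all and uses no character triple isomorphism here. Instead it anchors everything to Navarro's normal nucleus: one takes $N$ to be the unique maximal normal subgroup such that the constituents of $\chi_N$ are $\pi$-factored --- in general much larger than $\opi(G)$ --- and shows that an arbitrary generalized $\pi$-nucleus $(U,\psi)$ can be replaced by one whose subgroup contains $N$. This is the hard step, carried out by Lemma \ref{lemma 3.4} (showing $|NU:U|$ is a $\pi$-number) and Lemma \ref{lemma 3.5} (showing $\psi^{NU}$ is again $\pi$-factored, via the extension argument of Lemma \ref{lemma 3.2} using the ${\textup B}_{\pi}$-theory and the magic field automorphism). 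Only after $N \subseteq U$ is secured does the Clifford reduction over a constituent $\theta$ of $\psi_N$ go through (Lemma \ref{cor 3.3} together with Lemma \ref{lemma 3.1}), and Navarro's construction guarantees that $\theta$ is not $G$-invariant when $\chi$ is not factored, so the induction always lands in the proper subgroup $G_\theta$; there is no invariant case to handle. The base case, $\chi$ factored, is disposed of by Lemmas \ref{lemma 2.3} and \ref{lemma 3.1}.

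Two specific points. First, your assertion that after conjugation ``$\psi_1$ and $\psi_2$ both lie over $\alpha$; this forces $U_i \sbs T$'' is false as stated: a nucleus subgroup $U_i$ need not contain $N$, and even when it does, lying over $\alpha$ does not place $U_i$ inside $G_\alpha$ (after all, $\chi$ itself lies over $\alpha$). Arranging $N \subseteq U_i$ and then passing to the stabilizer is exactly what Lemmas \ref{lemma 3.4}, \ref{lemma 3.5}, and \ref{cor 3.3} accomplish, and these are the steps your outline omits. Second, the hypothesis $2 \in \pi$ does not enter through ``rigidity of extension theory of $\pi'$-special characters on odd-order groups''; it enters through Lemma \ref{navarro} (Navarro's result that a nonlinear $\pi'$-special character does not restrict irreducibly to the $\pi$-elements), which forces the $\pi'$-special factor of every nucleus character to be linear (Lemma \ref{lemma 2.3}) and thereby drives Lemmas \ref{lemma 3.1} and \ref{lemma 3.5}. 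Without identifying and proving these intermediate results, the proposal remains a plan rather than a proof.
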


The key to our work is a recent result of Navarro.  Replacing $p$ by a set of primes $\pi$ with $2 \in \pi$, the proof of Lemma 2.1 of \cite{newnav} proves:

\begin{lemma}\label{navarro}
Let $\pi$ be a set of primes with $2 \in \pi$, and let $G$ be a $\pi$-separable group.  Let $\chi \in \irrg$ be $\pi'$-special.  If $\chi (1) > 1$, then $\chi^o$ is not in $\ipig$.
\end{lemma}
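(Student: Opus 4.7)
The plan is to argue by contradiction and induction on $|G|$, paralleling the proof of \cite[Lemma 2.1]{newnav} with the prime $p$ replaced by the set of primes $\pi$ (with $2 \in \pi$ playing the role of ``$p$ odd''). Suppose $\chi \in \irrg$ is $\pi'$-special, $\chi(1) > 1$, and $\chi^o \in \ipig$, and take $G$ to be a minimum counterexample.

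First I would reduce to the case $\opi(G) = 1$. If $M \nrml G$ is any $\pi$-subgroup, then each irreducible constituent $\theta$ of $\chi_M$ has degree dividing $\gcd(\chi(1), |M|) = 1$ (since $\chi(1)$ is a $\pi'$-number and $|M|$ is a $\pi$-number), and a parallel argument for determinantal orders forces $o(\det \theta) = 1$, so $\theta = 1_M$. Hence $\opi(G) \sbs \ker \chi$, and passing to $G/\opi(G)$ we may assume $\opi(G) = 1$. By $\pi$-separability, $L := \opid(G) > 1$; and since $L$ is a $\pi'$-group, $\chi^o|_L = \chi(1) \cdot 1_L$, so $\chi^o \in \ipi(G \mid 1_L)$.

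Next, apply ordinary Clifford theory to $\chi_L$: let $\theta \in \irr(L)$ be a constituent, $T = G_\theta$, and $\eta \in \irr(T \mid \theta)$ the Clifford correspondent, so $\chi = \eta^G$. Then $\eta$ is $\pi'$-special by the standard behaviour of $\pi$-special characters under Clifford correspondence. If $T < G$, the inductive hypothesis applied to $\eta$ in $T$, together with the identity $\chi^o = (\eta^o)^G$ (and a separate monomial analysis when $\eta$ is linear), yields a contradiction. Hence $\theta$ is $G$-invariant; applying Lemma \ref{char trip} and Lemma \ref{char trip pair}, we replace $(G, L, \theta)$ by an isomorphic character triple in which $L$ becomes a central $\pi'$-subgroup of $G$, without changing $\chi(1)$, the $\pi'$-speciality, or the lift property. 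We may therefore assume $L$ is central in $G$ and $\theta$ is a linear $G$-invariant character of $L$.

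If $\theta = 1_L$, then $L \sbs \ker \chi$ and we pass to $G/L$ to close the induction. Otherwise $\theta \neq 1_L$, and this is the crux case. Here the hypothesis $2 \in \pi$ (which forces $\chi(1)$ to be odd) is essential: following Navarro, one derives the contradiction by combining the structure of $\pi'$-special characters lying over a nontrivial linear character of a central $\pi'$-subgroup with the uniqueness properties of $B_\pi$-lifts of irreducible $\pi$-partial characters in $\bpig$. The main obstacle is exactly this final step; the rest is routine reduction via Clifford theory and character triple isomorphisms.
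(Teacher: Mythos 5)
Your write-up is not a proof: it is a sequence of standard reductions followed by an explicit acknowledgment that the decisive step is missing. The entire content of the lemma lies in the case you yourself label the crux: $L=\opid(G)$ central, $\theta$ a nontrivial $G$-invariant linear character of $L$, and $\chi$ a nonlinear $\pi'$-special character with $\chi^o\in\ipig$. That is exactly where the hypothesis $2\in\pi$ must enter, and ``combining the structure of $\pi'$-special characters \dots with the uniqueness properties of $\bpi$-lifts'' does not identify an argument. Note that the paper does not reprove the statement either; it observes that the proof of Lemma 2.1 of \cite{newnav} carries over verbatim once $p$ is replaced by a set of primes $\pi$ with $2\in\pi$. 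So a self-contained proof here would have to actually reproduce, and check the generalization of, Navarro's argument --- which is precisely the part you have left blank.

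Two of the ``routine'' reductions also need repair. First, the assertion $\chi^o|_L=\chi(1)\cdot 1_L$ is not meaningful: $\chi^o$ is defined only on the $\pi$-elements of $G$, and $L\cap G^o=1$, so nothing is gained by saying that $\chi^o$ lies over $1_L$. Second, in the case $T=G_\theta<G$ your induction applies to the Clifford correspondent $\eta$ only when $\eta(1)>1$; when $\eta$ is linear the inductive hypothesis is vacuous, and the ``separate monomial analysis'' you defer is essentially the full strength of the lemma again, since a monomial $\pi'$-special character of $\pi'$-degree greater than $1$ induced from a linear character is exactly the kind of character that must be ruled out. Until the central case and the linear-correspondent case are supplied, the proposal establishes nothing beyond the (correct) observations that $\opi(G)\sbs\ker\chi$ and that one may pass to a character triple in which $\opid(G)$ is central.
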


For the remainder of this section, our work will parallel the work in \cite{vertodd}.  The following should be compared with Lemma 2.3 of \cite{vertodd}.

\begin{lemma} \label{lemma 2.3}
Let $\pi$ be a set of primes with $2 \in \pi$, and let $G$ be a $\pi$-separable group.  Let $\chi \in \irrg$ be such that $\chi^o \in \ipig$.  If $U \le G$ and $\psi \in \irr(U)$ is a $\pi$-factored character that induces $\chi$, then the $\pi'$-special factor of $\psi$ is linear.  Moreover, if $Q$ is a Hall $\pi$-complement of $U$, then $Q$ is a vertex subgroup of $\chi^o$.
\end{lemma}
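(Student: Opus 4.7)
My plan is to reduce the first statement to Lemma~\ref{navarro} by showing that the $\pi'$-special factor $\beta$ satisfies $\beta^o \in \ipi(U)$, whereupon Lemma~\ref{navarro} will immediately force $\beta(1)=1$. I would begin with the observation that $\chi^o = (\psi^G)^o = (\psi^o)^G$ is irreducible in $\ipig$ by hypothesis. Induction of $\pi$-partial characters is additive and sends non-zero $\pi$-partial characters to non-zero ones, so any proper decomposition $\psi^o = \phi_1 + \phi_2$ as a sum of non-zero $\pi$-partial characters of $U$ would descend to a proper decomposition $\chi^o = \phi_1^G + \phi_2^G$. Hence $\psi^o$ must itself lie in $\ipi(U)$.

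Next I would factor $\psi^o = \alpha^o \beta^o$. Since $\alpha$ is $\pi$-special, the standard result that $\pi$-special characters restrict canonically to irreducible $\pi$-partial characters gives $\alpha^o \in \ipi(U)$. The crux is then to deduce that $\beta^o$ is itself irreducible. Writing the Fong--Swan decomposition $\beta^o = \sum_i c_i \gamma_i$ with distinct $\gamma_i \in \ipi(U)$ and positive integer multiplicities $c_i$, I would obtain
\[
\psi^o \;=\; \alpha^o \beta^o \;=\; \sum_i c_i \,(\alpha^o \gamma_i),
\]
where each summand $\alpha^o \gamma_i$ is a non-zero $\pi$-partial character (it is the restriction to $\pi$-elements of the ordinary product $\alpha \cdot \wt\gamma_i$, where $\wt\gamma_i \in \irr(U)$ is any lift of $\gamma_i$). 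Since $\psi^o$ is irreducible, exactly one index can contribute, with coefficient $1$, and hence $\beta^o = \gamma_i \in \ipi(U)$. Applying Lemma~\ref{navarro} to the $\pi'$-special $\beta$ then forces $\beta(1)=1$.

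For the moreover statement, once $\beta(1)=1$ is in hand, the degree of $\psi^o$ equals $\alpha(1)$, which is a $\pi$-number. Thus $\psi^o$ is an irreducible $\pi$-partial character of $\pi$-degree on $U$ that induces $\chi^o$, and since $Q$ is a Hall $\pi'$-subgroup of $U$ by hypothesis, the definition of vertex at the start of Section~\ref{pairs} identifies $Q$ as a vertex subgroup of $\chi^o$. The main obstacle I foresee is the collapsing argument in the middle paragraph: one has to confirm that multiplication by the irreducible $\alpha^o$ cannot cause cancellation that converts a genuine sum into an irreducible product, and this will be handled by the non-vanishing and additivity of both induction and multiplication, combined with the lift--restrict correspondence between $\irr(U)$ and $\ipi(U)$.
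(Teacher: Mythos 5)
Your proposal is correct and follows essentially the same route as the paper: deduce $\psi^o \in \ipi(U)$ from the additivity of induction, conclude that the $\pi'$-special factor satisfies $\beta^o \in \ipi(U)$, apply Lemma~\ref{navarro} to get $\beta(1)=1$, and then note $\psi^o$ has $\pi$-degree so that $Q$ is a vertex of $\chi^o$ (the paper cites Theorem~B of \cite{IsNa} for this last step). The only difference is that your middle paragraph carefully justifies the implication $\psi^o \in \ipi(U) \Rightarrow \beta^o \in \ipi(U)$, which the paper asserts in one line; your collapsing argument for it is valid.
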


\begin{proof}
Note that since $\chi^o \in \ipig$, and $\psi^G = \chi$, then
$\psi^o \in \ipi(U)$.  Since $\psi$ is $\pi$-factored, we have $\psi = \alpha \beta$ where $\alpha$ is $\pi$-special and $\beta$ is $\pi'$-special.  It follows that $\beta^o \in \ipi(U)$.  By Lemma \ref{navarro}, $\beta (1) = 1$.  It follows that $\psi$ has $\pi$-degree and $\psi^o \in \ipi(U)$.  By Theorem B of \cite{IsNa}, $Q$ is a vertex subgroup of $\chi^o$.
\end{proof}

The next lemma is similar to Lemma 3.1 of \cite{vertodd}.

\begin{lemma} \label{lemma 3.1}
Let $\pi$ be a set of primes with $2 \in \pi$, and let $G$ be a $\pi$-separable group.  Let $\chi \in \irrg$ be such that $\chi = \alpha \beta$ where $\alpha$ is $\pi$-special and $\beta$ is linear and $\pi'$-special.  Suppose $\psi \in \irr(U)$ is $\pi$-factored and induces $\chi$.  If $\delta$ is the $\pi'$-special factor of $\psi$, then $\beta_U = \delta$.
\end{lemma}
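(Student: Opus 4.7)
The plan is to reduce to the case where the ambient character on $G$ is $\pi$-special, and then show that a certain auxiliary linear $\pi'$-special factor is trivial. First, by Lemma~\ref{lemma 2.3}, $\delta$ is linear, and since $\beta$ is linear $\pi'$-special on $G$, the restriction $\beta_U$ is linear $\pi'$-special on $U$; hence $\lambda := \delta \,\overline{\beta_U}$ is a linear $\pi'$-special character of $U$, and proving $\delta = \beta_U$ is equivalent to proving $\lambda = 1_U$.

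Using the identity $(\eta \cdot \mu_U)^G = \eta^G \cdot \mu$ for $\eta \in \irr(U)$ and $\mu \in \irr(G)$ linear, I would compute
\[
(\psi \,\overline{\beta_U})^G \;=\; \chi\,\overline{\beta} \;=\; \alpha\beta\,\overline{\beta} \;=\; \alpha,
\]
while on $U$ one has $\psi \,\overline{\beta_U} = \gamma \lambda$, a $\pi$-factored character with $\pi$-special factor $\gamma$ and (linear) $\pi'$-special factor $\lambda$. Thus the lemma reduces to the claim that whenever a $\pi$-factored character $\gamma\lambda$ of $U$ induces to a $\pi$-special character $\alpha$ of $G$, the factor $\lambda$ is trivial. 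A degree comparison (using that $\alpha(1)$ and $\gamma(1)$ are $\pi$-numbers and $\lambda$ is already linear) immediately forces $|G:U|$ to be a $\pi$-number.

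To finish the reduced claim, I would pass to $\pi$-partial characters. Since $\lambda^o = 1_{U^o}$, one obtains $(\gamma^o)^G = \alpha^o$ in $\ipig$, and $\alpha^o \in \ipig$ because $\alpha$ is $\pi$-special. A multiplicity count at the level of $\pi$-partial characters, combined with the fact that $\chi_i^o \neq 0$ for every ordinary irreducible constituent $\chi_i$ of $\gamma^G$, forces $\gamma^G$ to be an irreducible ordinary character of $G$; the closure of $\bpi$-characters under induction along subgroups of $\pi$-index (Isaacs' $\bpi$-theory, \cite{bpi}) then shows $\gamma^G$ is itself $\pi$-special, and the uniqueness of the $\pi$-special lift of $\alpha^o$ yields $\gamma^G = \alpha$.

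The main obstacle is the very last step: deducing $\lambda = 1_U$ from the identity $(\gamma\lambda)^G = \gamma^G = \alpha$. If $\lambda$ were nontrivial then by Frobenius reciprocity both $\gamma$ and $\gamma\lambda$ would be multiplicity-one constituents of $\alpha_U$, and one must rule this out. I would mirror the argument of Lemma~3.1 of \cite{vertodd}, with Lemma~\ref{navarro}---which is precisely what allows the odd-order hypothesis to be replaced by $2 \in \pi$---supplying the needed contradiction on $\pi'$-special factors. Uniqueness of the $\pi$-factorization of $\psi$, applied to $\gamma\delta$ versus the character $\gamma\beta_U$ (which also induces to $\chi$ since $(\gamma\beta_U)^G = \gamma^G \cdot \beta = \chi$), then gives $\delta = \beta_U$.
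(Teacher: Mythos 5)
Your opening reduction is exactly the one the paper uses: multiply by $\overline{\beta}$ to replace $\chi$ by the $\pi$-special character $\alpha = \chi\beta^{-1}$ and $\psi$ by $\psi\,\overline{\beta_U} = \gamma\lambda$, so that everything comes down to showing that a $\pi$-factored character which induces irreducibly to a $\pi$-special character has trivial $\pi'$-special factor. The paper disposes of that reduced claim in one stroke by citing Theorem C of \cite{induct}: a character of a subgroup that induces irreducibly to a $\pi$-special character is itself $\pi$-special; hence $\psi\,\overline{\beta_U}$ is $\pi$-special, and uniqueness of $\pi$-factorization gives $\delta = \beta_U$. You never invoke this result, and that is where your argument has a genuine gap.

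What you do establish (via the field-automorphism characterization of $\bpig$ and the injectivity of restriction to $G^o$ on $\bpig$) is that $\gamma^G$ is irreducible, $\pi$-special, and equal to $\alpha = (\gamma\lambda)^G$. But from $(\gamma\lambda)^G = \gamma^G$ you cannot get $\lambda = 1_U$ by the means you propose. Frobenius reciprocity only says that $\gamma$ and $\gamma\lambda$ are both constituents of $\alpha_U$, which is not by itself contradictory. Lemma \ref{navarro} cannot supply the missing contradiction: it rules out \emph{nonlinear} $\pi'$-special characters having irreducible restriction to the $\pi$-elements, and your $\lambda$ is already linear (indeed $\lambda^o = 1_{U^o}$ automatically, so the partial-character level sees nothing). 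Appealing to ``the argument of Lemma 3.1 of \cite{vertodd}'' is circular, since that is precisely the lemma being generalized and its proof is the same one-line application of Theorem C. Your closing sentence is also invalid as written: uniqueness of $\pi$-factorization compares two factorizations of the \emph{same} character, whereas you only know that $\gamma\delta$ and $\gamma\beta_U$ both induce to $\chi$, which does not force them to be equal --- that equality is the assertion to be proved. (A smaller point: your appeal to Lemma \ref{lemma 2.3} for the linearity of $\delta$ needs the observation that $\chi^o = \alpha^o \in \ipig$, which is not a stated hypothesis of the lemma but does follow since $\beta$ is linear and $\pi'$-special.) The fix is simply to quote Theorem C of \cite{induct} at the point where you have $(\psi\,\overline{\beta_U})^G = \alpha$.
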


\begin{proof}
Note that  $\alpha = \alpha \beta \beta^{-1} = \chi \beta^{-1}$.  It
follows that $(\psi \beta^{-1}|_U)^G = \psi^G \beta^{-1} = \chi \beta^{-1} = \alpha$.  Since $\alpha$ is $\pi$-special, we may use Theorem C of \cite{induct} to see that $\psi \beta^{-1}|_U$ is $\pi$-special.  We can write $\psi = \gamma \delta$ where $\gamma$ is $\pi$-special.  Now, $\gamma^o = \psi^o = (\psi
\beta^{-1}|_U)^o$, and so, $\gamma = \psi \beta^{-1}|_U = \gamma \delta \beta^{-1}|_U$. It follows that $\delta \beta^{-1}|_U = 1_U$, and hence, $\delta = \beta_U$.
\end{proof}

The next result should be compared with Lemma 3.2 of \cite{vertodd}.  Let $\pi$ be a set of primes with $2 \in \pi$ and suppose $G$ is $\pi$-separable.  We will need the basic properties of the set $\bpig \subseteq \irrg$ introduced in \cite{bpi}.  In particular, we need to know that restriction to $G^o$ gives a bijection from $\bpig$ to $\ipig$ and that the $\pi$-special characters of $G$ are precisely the characters of $\pi$-degree in $\bpig$.  We will also use the magic field automorphism that was described in \cite{odd}.  We write $\sigma$ to denote the magic field automorphism.  Let $\chi \in \irrg$. In \cite{odd}, it is proved that $\chi \in \bpig$ if and only if $\chi^\sigma = \chi$ and $\chi^o \in \ipig$.

\begin{lemma} \label{lemma 3.2}
Let $\pi$ be a set of primes with $2 \in \pi$, and let $G$ be a $\pi$-separable group with subgroup $U$.  Suppose $\chi \in \irrg$ satisfies $\chi^o \in \ipig$.  Assume $\psi \in \irr(U)$ is $\pi$-factored so that $\chi = \psi^G$.  Suppose $|G:U|$ is a $\pi$-number and the $\pi'$-special factor of $\psi$ extends to $G$. Then $\chi$ is $\pi$-factored.
\end{lemma}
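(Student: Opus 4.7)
The plan is to exhibit $\chi$ explicitly as a product of a $\pi$-special and a $\pi'$-special character of $G$. Write $\psi = \alpha\beta$ on $U$, with $\alpha$ $\pi$-special and $\beta$ $\pi'$-special. Since $\chi^o \in \ipig$, Lemma \ref{lemma 2.3} gives that $\beta$ is linear.

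The first substantive step is to upgrade the hypothesized extension of $\beta$ to $G$ to one that is $\pi'$-special. Let $\tilde\beta \in \irr(G)$ be any extension; it is automatically linear. Within the cyclic group $\langle \tilde\beta \rangle$ of linear characters of $G$, I decompose $\tilde\beta = \tilde\beta_\pi \cdot \tilde\beta_{\pi'}$ according to the $\pi$- and $\pi'$-parts of the order of $\tilde\beta$. Each factor is linear; $\tilde\beta_\pi$ has $\pi$-order and hence is $\pi$-special, while $\tilde\beta_{\pi'}$ has $\pi'$-order and hence is $\pi'$-special. Restricting to $U$, the uniqueness of the $\pi$-factorization of $\beta$ (which already has $\pi'$-order) forces $(\tilde\beta_\pi)_U = 1_U$, so if I let $\hat\beta = \tilde\beta_{\pi'}$, then $\hat\beta$ is a $\pi'$-special extension of $\beta$ to $G$.

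Now Frobenius reciprocity gives $\chi = \psi^G = (\alpha \cdot \hat\beta_U)^G = \alpha^G \cdot \hat\beta$. Since $\chi$ is irreducible and $\hat\beta$ is linear, $\alpha^G$ is irreducible, and since $\alpha(1)$ and $|G:U|$ are both $\pi$-numbers, $\alpha^G$ has $\pi$-degree.

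The main obstacle is to conclude that $\alpha^G$ is $\pi$-special. I plan to argue through $\bpi$-theory: $\alpha \in \bpi(U)$ since every $\pi$-special character lies in $\bpi$, and induction of $\bpi$-characters shows $\alpha^G \in \bpi(G)$ once $\alpha^G$ is known irreducible; as an element of $\bpi(G)$ of $\pi$-degree, $\alpha^G$ is $\pi$-special. The desired $\pi$-factorization $\chi = \alpha^G \cdot \hat\beta$ follows immediately.
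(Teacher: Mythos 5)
Your construction of a $\pi'$-special extension of $\beta$ is correct and genuinely different from the paper's: you decompose an arbitrary linear extension $\tilde\beta$ into its $\pi$-part and $\pi'$-part inside the group of linear characters of $G$ and use uniqueness of $\pi$-factorization on $U$ to kill the $\pi$-part, whereas the paper produces its $\pi'$-special extension $\delta$ by passing to $\ipid(G)$ and pulling back through the bijection from $\bpid(G)$ to $\ipid(G)$ (an argument that does not even presuppose $\beta$ is linear). Your route is more elementary and is valid here, since Lemma \ref{lemma 2.3} does give $\beta(1)=1$. The identity $\chi=(\alpha\cdot\hat\beta_U)^G=\alpha^G\hat\beta$ and the deduction that $\alpha^G$ is irreducible of $\pi$-degree are also fine.

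The gap is the final step. You assert that ``induction of $\bpi$-characters shows $\alpha^G\in\bpig$ once $\alpha^G$ is known irreducible.'' There is no such closure property to quote: $\bpi$ is not preserved by irreducible induction from arbitrary subgroups. If $\theta\in\bpi(U)$ and $\theta^G$ is irreducible, there is no reason for $(\theta^G)^o=(\theta^o)^G$ to be an irreducible $\pi$-partial character, which is exactly what membership of $\theta^G$ in $\bpig$ would force. It is also a warning sign that your argument for this step uses neither the hypothesis $\chi^o\in\ipig$ (except indirectly through Lemma \ref{lemma 2.3}) nor the hypothesis $2\in\pi$, both of which are essential at precisely this point. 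The paper closes the gap as follows: since $\alpha$ is $\pi$-special, $\alpha^\sigma=\alpha$ and hence $(\alpha^G)^\sigma=\alpha^G$ for the magic field automorphism $\sigma$ of \cite{odd}; and since the $\pi'$-special extension is linear, $(\alpha^G)^o=\chi^o(\hat\beta^o)^{-1}$ lies in $\ipig$ because $\chi^o\in\ipig$. The characterization from \cite{odd} --- available exactly because $2\in\pi$ --- that $\xi\in\bpig$ if and only if $\xi^\sigma=\xi$ and $\xi^o\in\ipig$ then yields $\alpha^G\in\bpig$, and being of $\pi$-degree it is $\pi$-special. You need to supply this (or an equivalent) argument; as written, the key step of your proposal is an unsupported assertion.
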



We will use the notation $\beta'$ to denote the restriction of an ordinary  character $\beta$ of $G$ to the $\pi'$-elements of $G$.

\begin{proof}
Let $\psi = \alpha \beta$ where $\alpha$ is $\pi$-special and
$\beta$ is $\pi'$-special.  Let $\varphi = \beta' \in \ipid(U)$. Let $\eta \in \irrg$ be an extension of $\beta$.  Now, $(\eta')_U = (\eta_U)' = \beta' \in \ipid(U)$.  It follows that $\eta' \in \ipid(G)$.  Let $\delta \in \bpid(G)$ so that $\delta' = \eta'$. Observe that $\delta (1) = \eta (1) = \beta (1)$ is a $\pi'$-number, and so $\delta$ is $\pi'$-special.  Also, $(\delta_U)' = \beta'$ implies that $\delta_U \in \irr(U)$.  By Theorem A of \cite{induct}, $\delta_U$ is $\pi'$-special.  This implies that $\delta_U = \beta$.

We now have $\chi = \psi^G = (\alpha \beta)^G = \alpha^G \delta$.  This implies that $\alpha^G \in \irrg$.  Notice that $(\alpha^G)^\sigma = (\alpha^\sigma)^G = \alpha^G$.  Also, $\chi^o = (\alpha^G \delta)^o = (\alpha^G)^o \delta^o \in \ipig$, and so, $(\alpha^G)^o \in \ipig$.  It follows that $\alpha^G$ is $\pi$-special.  We conclude that $\chi$ is $\pi$-factored.
\end{proof}

The next result is similar to Corollary 3.3 of \cite{vertodd}.

\begin{lemma} \label{cor 3.3}
Let $\pi$ be a set of primes with $2 \in \pi$, and let $G$ be a $\pi$-separable group.  Let $\chi \in \irrg$ be $\pi$-factored and have $\pi$-degree.  Let $N$ be a normal subgroup of $G$ and suppose $\theta \in \irr(N)$ is a constituent of $\chi_N$.  Let $T$ be the stabilizer of $\theta$ in $G$.  If $\psi \in \irr(T \mid \theta)$ is the Clifford correspondent for $\chi$ with respect to $\theta$, then $\psi$ is $\pi$-factored.
\end{lemma}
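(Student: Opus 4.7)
The plan is to decompose $\chi$ according to its $\pi$-factorization and show that the Clifford correspondent of $\chi$ over $\theta$ admits a parallel $\pi$-factorization. Write $\chi = \alpha\beta$ with $\alpha$ $\pi$-special and $\beta$ $\pi'$-special. Since $\chi$ has $\pi$-degree and $\beta(1)$ is a $\pi'$-number, we get $\beta(1)=1$, so $\beta$ is linear. In particular, $\beta_N$ is a linear ($\pi'$-special) character of $N$ that is $G$-invariant.

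Next I would identify $\theta$. Because $\beta_N$ is linear, we have $\chi_N = \alpha_N\beta_N$, so every irreducible constituent of $\chi_N$ is of the form $\alpha_0\beta_N$ for some constituent $\alpha_0$ of $\alpha_N$. By standard results on $\pi$-special characters (Gajendragadkar), each such $\alpha_0$ is itself $\pi$-special, and hence $\theta=\alpha_0\beta_N$ is a $\pi$-factorization of $\theta$. Since $\beta_N$ is $G$-invariant, $T = G_\theta = G_{\alpha_0}$.

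Now let $\alpha^* \in \irr(T \mid \alpha_0)$ be the Clifford correspondent of $\alpha$ with respect to $\alpha_0$. Appealing to the results of \cite{induct} used elsewhere in this section (which state that the Clifford correspondent of a $\pi$-special character over a $\pi$-special constituent is again $\pi$-special), $\alpha^*$ is $\pi$-special. Set $\psi_0 := \alpha^* \cdot \beta_T$. Then $\beta_T$ is linear and $\pi'$-special, so $\psi_0$ is $\pi$-factored (and in particular irreducible). Using that induction commutes with multiplication by restrictions of characters of $G$, we compute
\[
(\psi_0)^G = (\alpha^*\beta_T)^G = (\alpha^*)^G \cdot \beta = \alpha\beta = \chi,
\]
and $\psi_0$ lies over $\theta=\alpha_0\beta_N$. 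By the uniqueness of the Clifford correspondent, $\psi = \psi_0$, which is $\pi$-factored.

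There is no real obstacle here; the only point requiring care is the bookkeeping to ensure that (i) constituents of $\alpha_N$ are $\pi$-special, (ii) the Clifford correspondent of a $\pi$-special character is $\pi$-special, and (iii) the product $\alpha^*\beta_T$ is genuinely the Clifford correspondent (rather than just \emph{a} preimage), which follows from its degree matching and the uniqueness built into the Clifford correspondence.
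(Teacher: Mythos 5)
Your argument is correct and follows essentially the same route as the paper: factor $\chi=\alpha\beta$ with $\beta$ linear, pass to the Clifford correspondent of the $\pi$-special factor over the $\pi$-special constituent of its restriction to $N$, and identify $\psi$ with the product of that correspondent and $\beta_T$ via the uniqueness in the Clifford correspondence. The only divergence is that the paper verifies the $\pi$-speciality of the Clifford correspondent of the $\pi$-special factor by hand (using the magic field automorphism $\sigma$ together with the restriction to $\pi$-elements), whereas you quote it as a known fact; since Theorem C of \cite{induct}, already invoked in this section, gives exactly that a character inducing irreducibly to a $\pi$-special character is itself $\pi$-special, your citation is legitimate.
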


\begin{proof}
Observe that $\theta$ is $\pi$-factored.  We can write $\chi =
\gamma \delta$ and $\theta = \alpha \beta$ where $\gamma$ and
$\alpha$ are $\pi$-special and $\delta$ and $\beta$ are
$\pi'$-special.  Since $\chi$ has $\pi$-degree, $\delta (1) = 1$ and thus, $\delta_N = \beta$.  It follows that $T$ is the stabilizer of $\alpha$ in $G$.  We take $\mu \in \irr(T \mid \alpha)$ to be the Clifford correspondent for $\gamma$ with respect to $\alpha$. We have $\gamma (1) = |G:T| \mu (1)$, and thus, $\mu (1)$ is a $\pi$-number. Observe that $(\mu^o)^G = (\mu^G)^o = \gamma^o \in \ipig$ and thus, $\mu^o \in \ipi(T)$.  Since $\alpha^\sigma = \alpha$, we have $\mu^\sigma \in \irr(T \mid \alpha)$.  Since $(\mu^\sigma)^G = (\mu^G)^\sigma = \mu^G$, it follows that $\mu^\sigma = \mu$, and we conclude that $\mu$ is $\pi$-special.  Because $\delta$ is linear and $\pi'$-special, $\delta_T$ is $\pi'$-special.  We see that $(\mu\delta_T)^G = \mu^G \delta = \gamma \delta= \chi$.  Also, $(\mu\delta_T)_N = \mu_N \delta_N$, and so, $\alpha \beta = \theta$ is a constituent of $(\mu\delta_T)_N$.  We obtain $\mu\delta_T \in \irr(T \mid \theta)$.  Since $(\mu\delta_T)^G = \chi = \psi^G$, we can use the Clifford correspondence to see that $\psi = \mu \delta_T$.  Therefore, $\psi$ is $\pi$-factored.
\end{proof}

Since the proof of the next lemma is essentially the proof of Lemma 3.4 of \cite{vertodd} where Lemma \ref{lemma 2.3} is used in place of Lemma 2.3 of \cite{vertodd}, we do not include it here.

\begin{lemma} \label{lemma 3.4}
Let $\pi$ be a set of primes with $2 \in \pi$, and let $G$ be a $\pi$-separable group.  Let $\chi \in \irrg$ be a lift of $\varphi \in \ipig$, and suppose $N$ is normal in $G$ such that the constituents of $\chi_N$ are $\pi$-factored.  Suppose $\psi \in \irr(U)$ is $\pi$-factored, and suppose $\psi^G = \chi$.  Then $|NU:U|$ is a $\pi$-number.
\end{lemma}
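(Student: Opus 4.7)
My plan is to reduce the claim to a statement about vertex subgroups. Set $L = N \cap U$; then $|NU:U| = |N:L|$, so it suffices to show that $|L|_{\pi'} = |N|_{\pi'}$. Write $\psi = \alpha\beta$ with $\alpha$ $\pi$-special and $\beta$ $\pi'$-special, and let $Q$ be a Hall $\pi$-complement of $U$. Since $\chi$ is a lift, $\chi^o = \varphi \in \ipig$, so Lemma \ref{lemma 2.3} applies: $\beta$ is linear and $Q$ is a vertex subgroup of $\varphi$. The subgroup $Q \cap N$ is then a $\pi'$-subgroup of $N$ contained in $L$, and the whole claim will follow once I show that $Q \cap N$ is in fact a Hall $\pi'$-subgroup of $N$.

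To establish this, I would exploit the hypothesis that every constituent of $\chi_N$ is $\pi$-factored. Let $\theta = \mu\nu$ be such a constituent, with $\mu$ $\pi$-special and $\nu$ $\pi'$-special. A standard fact from the Isaacs--Gajendragadkar theory (cf.\ \cite{bpi}) is that a $\pi'$-special character is constant, equal to its degree, on $\pi$-elements, so $\theta^o = \nu(1)\cdot\mu^o$ is a positive integer multiple of the irreducible $\pi$-partial character $\mu^o \in \ipi(N)$. Summing over the constituents of $\chi_N$ and comparing with the Clifford decomposition of $\varphi_N = \chi_N^o$, I deduce that every irreducible constituent of $\varphi_N$ has the form $\mu^o$ for some $\pi$-special $\mu \in \irr(N)$, and hence has $\pi$-degree. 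Invoking the $\pi$-partial analogue of \cite[Theorem~3.2]{vertnormal} (that is, Lemma \ref{normlemma}(a) translated from Brauer characters to $\pi$-partial characters), there is a constituent $\eta$ of $\varphi_N$ whose vertex subgroup is $Q \cap N$. Because $\eta$ has $\pi$-degree, $N$ itself serves as an inducing subgroup for $\eta$ with inducing character $\eta$, so every vertex of $\eta$ is a Hall $\pi'$-subgroup of $N$. Thus $|Q\cap N| = |N|_{\pi'}$, and together with $Q \cap N \leq L$ this gives $|L|_{\pi'} = |N|_{\pi'}$, as required.

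The main obstacle I anticipate is not any genuine novelty but rather locating (or re-deriving) the $\pi$-partial analogue of \cite[Theorem~3.2]{vertnormal}; the argument in that paper should go through essentially verbatim in the $\pi$-separable setting with $2 \in \pi$, but some care is needed in transferring Brauer-character vertex language to the $\pi$-partial-character vertex language of \cite{IsNa}. The other inputs --- linearity of $\beta$ from Lemma \ref{lemma 2.3}, the identity $\theta^o = \nu(1)\mu^o$ for $\pi$-factored $\theta$, and Clifford theory for $\pi$-partial characters --- are standard in the Isaacs framework, and mirror exactly how the argument proceeds in \cite{vertodd} with the odd-order hypothesis replaced by $2 \in \pi$.
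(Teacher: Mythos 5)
Your opening reduction is sound: since $Q$ is a Hall $\pi$-complement of $U$ and $N\cap U\nrml U$, the conclusion that $|NU:U|=|N:N\cap U|$ is a $\pi$-number is indeed equivalent to $Q\cap N$ being a Hall $\pi'$-subgroup of $N$, and Lemma \ref{lemma 2.3} does give that $Q$ is a vertex subgroup of $\varphi$. The gap is in your second paragraph, and it is the step carrying all the weight. The ``standard fact'' you invoke --- that a $\pi'$-special character is constant, equal to its degree, on $\pi$-elements --- is false for nonlinear $\pi'$-special characters; it is true only in the linear case (a linear $\pi'$-special character has $\pi'$-order and so kills $\pi$-elements). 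For a counterexample compatible with $2\in\pi$, take $\pi'=\{3\}$, let $E$ be extraspecial of order $27$, let $t$ be an involution inverting $E$ modulo its center and centralizing the center, and let $\nu$ be the $\{3\}$-special extension to $E\langle t\rangle$ of a faithful degree-$3$ character of $E$; then $t$ is a $\pi$-element but $|\nu(t)|<3=\nu(1)$, since $t$ cannot act as a scalar in a representation faithful on $E$. Consequently the identity $\theta^o=\nu(1)\mu^o$ for a constituent $\theta=\mu\nu$ of $\chi_N$ is unjustified unless you already know $\nu$ is linear --- and knowing that the $\pi'$-special factors of the constituents of $\chi_N$ are linear is essentially equivalent to the lemma itself (it is precisely what forces the constituents of $\varphi_N$ to have $\pi$-degree and hence full vertex in $N$). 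Lemma \ref{navarro} does not repair this: it only says that $\nu^o$ is reducible as a $\pi$-partial character when $\nu(1)>1$, and since the constituents of $\varphi_N$ need not a priori be the $\theta_i^o$ themselves, no contradiction arises from $\theta_i^o$ decomposing further. So the argument is circular at its core.

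Two secondary points. First, your other external input --- the $\pi$-partial analogue of Lemma \ref{normlemma}(a) --- is plausible and you rightly flag it as needing verification, but it is not where the proof fails. Second, your overall strategy (compute the vertex of a constituent of $\varphi_N$ and show it is a full Hall $\pi'$-subgroup) is genuinely different from the route the paper takes: the paper omits the proof and defers to Lemma 3.4 of \cite{vertodd}, whose argument works directly with the factored inducing character $\psi$ and the constituents of its restriction to $N\cap U$, rather than with vertices of constituents of $\varphi_N$. The vertex-theoretic reformulation is attractive, but as written the proof does not close because of the false constancy claim above.
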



This next lemma is similar to Lemma 3.5 of \cite{vertodd}.

\begin{lemma} \label{lemma 3.5}
Let $\pi$ be a set of primes with $2 \in \pi$, and let $G$ be a $\pi$-separable group.  Let $\chi \in \irrg$ be a lift of $\varphi \in \ipig$.  Suppose $\psi$ is a $\pi$-factored character of some subgroup $H$ of $G$ that induces $\chi$, and suppose there is a normal subgroup $N$ of $G$ such that the constituents of $\chi_N$ are $\pi$-factored and $G = NH$.  Then $\chi$ is $\pi$-factored and the $\pi'$-special factor of $\chi$ restricts irreducibly to the $\pi'$-special factor of $\psi$.
\end{lemma}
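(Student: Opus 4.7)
The approach is to verify the two hypotheses of Lemma \ref{lemma 3.2} and then invoke it. Writing $\psi = \alpha\beta$ with $\alpha$ the $\pi$-special factor and $\beta$ the $\pi'$-special factor, the first step is to apply Lemma \ref{lemma 3.4} with $U = H$: its hypotheses coincide with those of Lemma \ref{lemma 3.5}, and its conclusion gives that $|NH:H|$ is a $\pi$-number, which equals $|G:H|$ since $G = NH$. Lemma \ref{lemma 2.3}, using that $\chi$ is a lift, then forces $\beta$ to be linear.

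The heart of the argument is producing an extension of $\beta$ to an irreducible character of $G$. Pick a constituent $\theta = \gamma\eta$ of $\chi_N$ with $\gamma$ $\pi$-special and $\eta$ $\pi'$-special on $N$. A Clifford analysis of $\chi^o|_N = \varphi_N$, exploiting that $\chi$ is a lift, should force $\theta^o$ to lie in $\ipi(N)$; combined with the $\pi$-factored form of $\theta$ and Lemma \ref{navarro}, this makes $\eta$ linear. Since $G = NH$, Mackey's formula gives $\chi_N = (\psi_{H \cap N})^N = (\alpha_{H \cap N}\beta_{H \cap N})^N$, and Frobenius reciprocity combined with uniqueness of $\pi$-factorization on $H \cap N$ then identifies $\eta_{H \cap N}$ with $\beta_{H \cap N}$, after replacing $\theta$ by a $G$-conjugate if necessary. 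The two compatible linear $\pi'$-special characters $\eta$ on $N$ and $\beta$ on $H$ glue, via $G = NH$, into a single linear character $\xi \in \irr(G)$ with $\xi_H = \beta$; $\xi$ is $\pi'$-special because its image lies in the $\pi'$-roots of unity.

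Finally, Lemma \ref{lemma 3.2} applied with $U = H$ and extension $\xi$ yields that $\chi$ is $\pi$-factored. The construction in the proof of Lemma \ref{lemma 3.2} further identifies the $\pi'$-special factor of $\chi$ as an element $\delta \in \bpid(G)$ with $\delta_H = \beta$, which is precisely the second conclusion. The main obstacle is the linearity of $\eta$ together with the subsequent gluing: simply inheriting a $\pi$-factored structure from the constituents of $\chi_N$ does not by itself force $\eta$ to be linear, and it is the Clifford/Navarro analysis using that $\chi$ is a lift of $\varphi \in \ipig$---paralleling the treatment in \cite{vertodd}---that makes the extension possible. Since $H$ is not assumed normal in $G$, the hypothesis $G = NH$ is what translates the extension problem across $H \subseteq G$ into the compatibility of $\eta$ and $\beta$ on $H \cap N$.
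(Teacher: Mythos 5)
Your overall strategy --- extend the $\pi'$-special factor $\beta$ of $\psi$ to $G$ and then invoke Lemma \ref{lemma 3.2} --- is the same as the paper's, and your opening steps (Lemma \ref{lemma 3.4} gives that $|G:H|$ is a $\pi$-number, Lemma \ref{lemma 2.3} gives that $\beta$ is linear) match the paper exactly. But the crucial step, actually producing the extension, is asserted rather than proved, and as stated it does not work. Two ``compatible'' linear characters $\eta$ on $N$ and $\beta$ on $H$ with $G=NH$ do not in general glue to a linear character of $G$: already for $G=S_3$, $N=A_3$, $H=\langle (12)\rangle$ one has $G=NH$ and $N\cap H=1$, so every pair agrees on the intersection, yet a nontrivial linear character of $N$ is the restriction of no linear character of $G$. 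Some hypothesis forcing the extension (invariance together with a coprimality condition) is indispensable, and supplying it is the real content of the lemma. The paper obtains it by a minimal-counterexample induction on $|G:H|+|N|$: choose $K\nrml G$ with $N/K$ a chief factor, use minimality to force $K\le H$, show that the $\pi'$-special factor $\nu$ of a constituent of $\psi_K$ is $G$-invariant (because it extends into both $N$ and $H$ and $G=NH$), observe that $|N:K|$ divides the $\pi$-number $|G:H|$ while $\nu$ is $\pi'$-special, and then apply Corollary 4.2 of \cite{bpi} to get a restriction bijection from $\irr(G\mid\hat\nu)$ onto $\irr(H\mid\hat\nu_{N\cap H})$, whose surjectivity is precisely what extends $\beta$ to $G$. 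None of this machinery appears in your argument, and without it the gluing claim is a genuine gap.

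Two smaller points. Your route to the linearity of the $\pi'$-special factor $\eta$ of $\theta$ --- a ``Clifford analysis'' forcing $\theta^o\in\ipi(N)$ --- is unjustified: constituents of $\chi_N$ for a lift $\chi$ need not themselves be lifts (that failure is exactly what Theorem \ref{first} is about). It is also unnecessary: once $|G:H|$ and $\psi(1)$ are $\pi$-numbers, $\chi$ has $\pi$-degree, so $\theta(1)$ divides a $\pi$-number and the $\pi'$-special factor of $\theta$ is linear; this is the paper's argument. Likewise, your identification of $\eta_{H\cap N}$ with $\beta_{H\cap N}$ via Mackey and uniqueness of $\pi$-factorization is shaky, since $H\cap N$ need not be normal in $N$ or in $H$ and its irreducible characters need not be $\pi$-factored; the paper instead compares the factors over the normal subgroup $K\le H\cap N$, where Clifford theory and the uniqueness of $\pi$-factorization genuinely apply.
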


\begin{proof}
Notice that the second conclusion follows from the first conclusion by Lemma \ref{lemma 3.1}.  We assume the first conclusion is not true, and we take $G$, $N$, and $H$ to be a counterexample with $|G:H| + |N|$ minimal.

By Lemma \ref{lemma 2.3}, the $\pi'$-special factor of $\psi$ is linear, so $\psi (1)$ is a $\pi$-number.  Applying Lemma \ref{lemma 3.4}, we see that $|G:H| = |HN:N|$ is a $\pi$-number.  Since $\chi (1) = |G:H| \psi (1)$, we see that $\chi$ has $\pi$-degree.

Choose $K$ normal in $G$ so that $N/K$ is a chief factor for $G$.  Notice that the irreducible constituents of $\chi_K$ are $\pi$-factored.  If $G = HK$, then $G$, $K$, and $H$ form a counterexample with $|G:H| + |K| < |G:H| + |N|$ violating the choice of minimal counterexample.  Thus, we have $HK < G$.

Notice that $G = NH = N(HK)$.  Notice that $\psi^{HK} \in \irr(HK)$ will be a lift of a partial character in $\ipi(HK)$.  Also, the irreducible constituents of $(\psi^{HK})_K$ are constituents of $\chi_K$, and thus must be factored.  If $H < HK$, then $|HK:H| + |K| < |G:H| + |N|$, and so $HK$, $K$, and $H$ cannot form a counterexample.  Thus, $\psi^{HK}$ must be factored and induce $\chi$.  Also, $|G:HK| + |N| < |G:H| + |N|$, so $G$, $HK$, and $N$ do not form a counterexample.  We conclude that $\chi$ is $\pi$-factored, a contradiction.  This implies that $H = HK$.

We have $K \le H$.  Let $\eta$ be an irreducible constituent of $\psi_K$.  Notice that $\eta^N$ has an irreducible constituent $\theta$ which is a constituent of $\chi_N$, so $\theta$ and $\eta$ are both $\pi$-factored.  Since $\chi$ has $\pi$-degree, $\theta$ has a linear $\pi'$-special factor.  If $\nu$ is the $\pi'$-special factor of $\eta$, then $\nu$ extends to both the $\pi'$-special factor of $\theta$ and the $\pi'$-special factor of $\psi$.  This implies that $\nu$ is invariant in both $N$ and $H$.  Since $G = NH$, we conclude that $\nu$ is $G$-invariant.

Note that $|N : K|$ divides the $\pi$-number $|G: H|$ and thus $|N : K|$ is a $\pi$-number.  Let $\hat\nu$ be the unique $\pi'$-special extension of $\nu$ to $N$, and since $\nu$ is $G$-invariant so is $\hat\nu$.  We can now apply Corollary 4.2 of \cite{bpi} to see that restriction defines a bijection from $\irr(G \mid \hat\nu)$ to $\irr(H \mid \hat\nu_{N \cap H})$.  Observe that the $\pi'$-special factor of $\psi$ will belong to $\irr(H \mid \hat\nu_{N \cap H})$ since $\hat\nu_{N \cap H}$ is the unique $\pi'$-special extension of $\nu$ to $N \cap H$.  It follows that the $\pi'$-special factor of $\psi$ extends to $G$, and applying Lemma \ref{lemma 3.2} we conclude that $\chi$ is factored, as desired.
\end{proof}

We make use of the normal nucleus constructed by Navarro in \cite{npi}.  We quickly summarize this construction.  Fix a character $\chi \in \irrg$.  Navarro shows that there is a unique subgroup $N$ that is maximal subject to being normal in $G$ and the irreducible constituents of $\chi_N$ are $\pi$-factored.  If $N = G$, then take $(G,\chi)$ to be the normal nucleus of $\chi$.  If $N < G$, let $\theta$ be an irreducible constituent of $\chi_N$.  Navarro shows that in this case $\theta$ is not $G$-invariant.  We then let $\chi_{\theta} \in \irr(G_{\theta} \mid \theta)$ be the Clifford correspondent for $\chi$ with respect to $\theta$.  We define the normal nucleus for $\chi$ to be the normal nucleus of $\chi_\theta$ which can be computed inductively since $G_{\theta} < G$.  Note that the process terminates when we have a factorable character, and thus the normal nucleus character of $\chi$ is factorable and induces to $\chi$.  (The definition of the normal nucleus is obviously motivated by Isaacs' construction of the subnormal nucleus in \cite{bpi}.)  It can be easily seen that all of the normal nuclei for $\chi$ are conjugate.

The proof of Theorem \ref{second main} is essentially the proof of Theorem 4.1 of \cite{vertodd}, and thus we do not include it here in full detail.  However, we do provide a brief sketch of the proof.  The goal is to show that if $(U, \psi)$ is any generalized $\pi$-nucleus of $\chi$, then the generalized $\pi$-vertex of $\chi$ defined by $(U, \psi)$ is conjugate to a vertex for $\chi$ arising from a normal nucleus.  Lemmas \ref{lemma 2.3} and \ref{lemma 3.1} allow us to assume that $\chi$ is not factorable.  Let $N \nrml G$ be maximal so that the constituents of $\chi_N$ are factorable.  By Lemma \ref{lemma 3.4}, we see that $|NU : U|$ is a $\pi$-number, and thus Lemma \ref{lemma 3.5} allows us to replace the pair $(U, \psi)$ with $(NU, \psi^{NU})$, and thus we may assume $N \subseteq U$.  Letting $\theta$ be a constituent of $\psi_N$, we use Lemma \ref{cor 3.3} and Lemma \ref{lemma 3.1} to replace $(U, \psi)$ with the pair $(U_{\theta}, \xi)$, where $\xi$ is the Clifford correspondent for $\psi$ in $\irr(U_{\theta} |\theta)$.  We finish by applying the inductive hypothesis to the group $G_{\theta}$ and the Clifford correspondent for $\chi$ lying over $\theta$, which by definition has a normal nucleus in common with $\chi$.



\end{document}